\documentclass[10pt, a4paper]{article}

\usepackage{amsmath, amssymb, amsthm, mathtools, bbm}
\usepackage{mathrsfs}
\usepackage{url}
\usepackage{tikz}
\usepackage{color}
\usepackage{authblk}

\theoremstyle{plain}
\newtheorem{theorem}{Theorem}[section]
\newtheorem{proposition}[theorem]{Proposition}
\newtheorem{lemma}[theorem]{Lemma}

\newtheorem{fact}[theorem]{Fact}

\theoremstyle{definition}
\newtheorem{definition}{Definition}[section]
\newtheorem*{notation}{Notation}

\theoremstyle{remark}
\newtheorem{remark}{Remark}[section]

\title{ Scaling Limits of Density Fluctuation Fields of the Symmetric Partial Exclusion Processes Approaching the Zero Range Process}
\author{Kanta NOZAWA}
\affil{Graduate School of Mathematical Sciences, The University of Tokyo, 3-8-1 Komaba, Meguro-ku, Tokyo 153-8914, Japan}
\date{\today} 

\begin{document}

\maketitle

\begin{abstract}
  We study equilibrium fluctuations for the symmetric partial exclusion process (SPEP) on the one-dimensional integer lattice $\mathbb{Z}$, an interacting particle system in which each site is occupied by at most $\alpha$ particles. We assume that the jump rate is decomposable and that the instantaneous current admits a gradient form. Our aim is to study the scaling limit of the density fluctuation field of the SPEP starting from an invariant measure and to derive the corresponding stochastic partial differential equation (SPDE). 
  A distinctive feature of our setting is that the maximum number of particles per site depends on the scaling parameter $n$ as $\alpha=\alpha_n \propto n^a$ for some $a>0$, so that the model interpolates between the partial exclusion process ($\alpha <\infty$) and the symmetric zero range process (SZRP, $\alpha = \infty$). Under the diffusive rescaling in which the lattice spacing is $1/n$ and time is accelerated by a factor $n^2$, we prove that the density fluctuation field converges to the solution of the stochastic heat equation. 
  The limiting SPDE coincides with the one obtained for the SZRP. A key ingredient in the proof is the convergence, under a suitable rescaling, of the partition function of the reversible measure of the SPEP to that of the SZRP. This convergence implies that expectations of a wide class of local functions under the reversible measure of the SPEP converge to the corresponding expectations under the reversible measure of the SZRP, and provides a rigorous sense in which the SPEP approximates the SZRP as $n\to\infty$.
\end{abstract}

\section{Introduction}
  In non-equilibrium statistical mechanics and probability theory, the macroscopic behavior of interacting particle systems has been a subject of extensive study. A central result in this direction is the hydrodynamic limit, which asserts that the empirical measure associated with the particle density converges to the solution of a deterministic partial differential equation under a suitable space-time rescaling. This can be viewed as the law of large numbers for the particle system. In the present paper we consider fluctuations around this deterministic limit when the system is in equilibrium. More precisely, we study the density fluctuation field, which captures the deviation of the empirical particle density from its mean at each site, and establish that it converges, after some scaling, to the solution of a stochastic partial differential equation (SPDE).
 
  For a broad class of reversible systems with a conserved quantity on the one-dimensional integer lattice or the one-dimensional discrete torus, it is well-known that the scaling limit of the fluctuation field is the solution of the stochastic heat equation (SHE) \cite{BroxRost, MPSD1986, CCChung, Kipnis_Landm}:
\begin{equation}\label{SHE}
  du=D\Delta u\,dt + \sqrt{2\chi D}\,\nabla d\mathcal{B}_t.
\end{equation}
Here $D$ and $\chi$ are positive constants, and $\mathcal{B}$ denotes an $\mathcal{S}'(\mathbb{R})$-valued Brownian motion whose precise definition is given below.

Two representative examples of reversible systems whose density fluctuation field converges to the SHE are the symmetric simple exclusion process (SSEP) and the symmetric zero range process (SZRP). It is then natural to ask whether there exists an interacting particle system that interpolates between these two models, and whether the SHE persists as the scaling limit of its fluctuation field along the interpolation. Motivated by this question, in the present paper we consider the symmetric partial exclusion process (SPEP), an interacting particle system with exclusive interaction in which the maximum number of particles per site is $\alpha \in \mathbb{N}
$. The SPEP formally interpolates between the SSEP and the SZRP \cite{d80477dd344645dc8ca9cb717946fc90}: indeed, $\alpha = 1$ recovers the SSEP, while $\alpha = \infty$ formally corresponds to the SZRP.
The main result of \cite{d80477dd344645dc8ca9cb717946fc90} establishes that, for the SPEP satisfying the gradient condition with decomposable jump rates, the scaling limit of the density fluctuation field under the equilibrium measure is given by the stationary solution of the SHE. 
A key feature distinguishing our setting from \cite{d80477dd344645dc8ca9cb717946fc90} is that we let the jump rate depend on $n$ by taking $\alpha = \alpha_n \propto n^a$, so that the strength of the interaction between particles changes as $n \to \infty$. A primary goal of this paper is to clarify how the SPEP is connected to the SZRP in this limiting procedure.

Another motivation for introducing models with $n$-dependent jump rates comes from studies of the derivation of the stochastic Burgers equation (SBE): for some constant $\lambda \in \mathbb{R}$,

\begin{equation}\label{SBE}
  du=D\Delta udt + \frac{\lambda}{2}\nabla\!\left(u^2 \right)dt + \sqrt{2\chi D}\nabla d\mathcal{B}_t.
\end{equation}

It is well-known that the SBE has been derived from weakly asymmetric particle systems \cite{bertini1997stochastic, Gon_alves_2013, d80477dd344645dc8ca9cb717946fc90}, and a natural question is whether it can also be obtained from totally asymmetric systems. This has motivated the introduction of models in which the jump rates depend on the scaling parameter $n$. This idea was realized, for instance, in the totally asymmetric ZRP (TAZRP) with $n$-dependent jump rates \cite{HAYASHI2023180}, and more recently extended to the PEP with $\alpha$ depending on $n$ \cite{HS2026+} (in preparation). In \cite{HAYASHI2023180}, the TAZRP with $n$-dependent jump rates is studied. The key observation therein is that, heuristically, as $n \to \infty$ the interaction vanishes and the system converges to a system of independent random walks, equivalent to the SZRP without any interaction. Exploiting this structure, it is shown that the scaling limit of the fluctuation field of the TAZRP converges to the SBE.

We now return to the derivation of the SHE for the SPEP, which is the main concern of this paper. A distinctive feature of our setting, which arises from the $n$-dependence of $\alpha$, is that the invariant measure is of product form with marginals depending on $n$. Consequently, a precise understanding of the asymptotic behavior of expectations of local functions under this invariant measure is indispensable for the derivation of the scaling limit of the fluctuation field.

To address this, we show that the partition function of the reversible measure of the SPEP converges, under an appropriate rescaling of the partition function and its associated parameters, to that of the SZRP. As a consequence, expectations of a wide class of local functions under the reversible measure of the SPEP converge to the corresponding expectations under the reversible measure of the SZRP. This convergence plays a pivotal role in the proof of our main result and, moreover, provides a justification of the sense in which the SPEP approximates the SZRP.

\subsection{Organization of the Paper}
The rest of this paper is organized as follows. In Section 2, we introduce the notation and define the SPEP, and discuss its relation to the SZRP. In Section 3, we define the density fluctuation field, formulate the SHE via a martingale problem, and state our main result. The proof of the main theorem is carried out in Sections 4–6: Section 4 establishes the convergence of expectations under the reversible measure of the SPEP to those under the reversible measure of the SZRP, Section 5 is devoted to the first-order Boltzmann–Gibbs principle, and Section 6 completes the proof via tightness and the identification of limit points.
Basic notions concerning Schwartz spaces, tempered distributions, white noise, and distribution-valued Brownian motions are recalled in Appendix A.
\section{Model}\label{sec:model}
In this section, we introduce the symmetric partial exclusion process (SPEP) and its reversible measures.
\subsection{Notation}
Throughout this manuscript, let $T>0$, $\mathbb{N}\coloneq \{1,2,3,\cdots\}$, $\mathbb{N}_0\coloneq \{0,1,2,\cdots\}$, $n\in\mathbb{N}$. Furthermore, we fix $a, \alpha>0$ and let $\alpha_n\coloneq \lfloor{\alpha n^a}\rfloor$ where $\lfloor\cdot\rfloor$ denotes the integer part of the argument. In addition, $C>0$ denotes a generic constant whose value may change from line to
line. In this manuscript, $C$ is assumed not to depend on $n$, $l$ and $t$ (but may
depend on $f$, $\phi$, $T$ or any other parameters).

\subsection{The SPEP}
  Let $\Omega_n\coloneq\{0,1,\cdots,\alpha_n\}^\mathbb{Z}$ be the configuration space. An element of $\Omega_n$ is often denoted by $\eta=(\eta_x)_{x\in\mathbb{Z}}$.

  Suppose that a function $c:\mathbb{N}_0\to\mathbb{R}_{\geq0}$ satisfies the following four properties:
  \begin{itemize}
    \item $c(0)=0$
    \item $c$ is strictly increasing.
    \item $\liminf_{m\to\infty}c(m)/m \geq K$ for some $K>0$. In particular, $\lim_{m\to\infty}c(m)=\infty$.
    \item For any $\lambda>0$, the following series converges.
    \begin{equation}\label{fml-c-L-2}
      \sum_{m=2}^{\infty}\frac{c(m)\lambda^m}{c(m-1)c(m-2)\cdots c(1)}<\infty.
    \end{equation}
  \end{itemize}
  The last condition is technical, but it is necessary to ensure that $c$ is square-integrable with respect to the stationary measure of the ZRP. For example, $c(m)=\exp(m^\gamma)$ for some $\gamma>0$ satisfies this condition, while $c(m)=2^{2^m}$ does not.

  \begin{definition}
    Let $d_n:\{0,1,\cdots,\alpha_n\}\to\mathbb{R}_{\geq0}$ be a mapping with $d_n(0)\coloneq 0$ and $d_n(m)>0$ for $m\in\{1,2,\cdots,\alpha_n\}$. A Markov process on the state space $\Omega_n$ is called the symmetric partial exclusion process (or the SPEP for short) if its generator is given by
    \begin{equation}
      L_nf(\eta) \coloneq \frac{1}{c(\alpha_n)}\sum_{x,y\in\mathbb{Z}\,;\,|x-y|=1}c(\eta_x)d_n(\alpha_n-\eta_y)\left[f(\eta^{x,y})-f(\eta) \right]
    \end{equation}
    where $\eta\in\Omega_n$, $f:\Omega_n\to\mathbb{R}$ is a local function\footnote{We say that a function $f:\Omega_n\to\mathbb{R}$ is  \textit{local} if it depends only on a finite number of occupation variables, i.e.  if there exists a finite set $A \subset \mathbb Z$ such that $f (\eta) = f (\tilde\eta )$ for any
$\eta,\tilde\eta\in\Omega_n$ with $\eta(x) = \tilde \eta(x)$ for any $x \in A$. For a local function $f$, the support of $f$, denoted by $\mathrm{supp}(f)$, is defined as the smallest such set.}  and 
    \begin{equation}
      \eta^{x,y}(z)\coloneq
      \begin{cases}
        \eta_x-1\quad &z=x\\
        \eta_y+1\quad &z=y\\
        \eta_z\quad &\text{otherwise}.
      \end{cases}
    \end{equation}
  \end{definition}
  \begin{remark}
    The SPEP was introduced in \cite{d80477dd344645dc8ca9cb717946fc90}. In our setting, where the maximum number of particles per site depends on $n$, we need to slow down the dynamics defined in \cite{d80477dd344645dc8ca9cb717946fc90} by a factor of $c(\alpha_n)$ in order to obtain a non-trivial limit of the density fluctuation field as $n\to\infty$.
  \end{remark}

  \subsection{Stationary measures}
  \begin{definition}
    For $m\in\mathbb{N}_0$, define
    \begin{align*}
      c!(m) &\coloneq \begin{cases}
        c(m)c(m-1)\cdots c(1)\quad &m\in\mathbb{N} \\
        1\quad&m=0
      \end{cases}\\
      d_n!(m) &\coloneq \begin{cases}
        d_n(m)d_n(m-1)\cdots d_n(1)\quad &m\in\{1,2,\cdots,\alpha_n\} \\
        1\quad&m=0.
      \end{cases}\\
    \end{align*}
  \end{definition}
  \begin{definition}
    For $\lambda\in[0,\infty)$, we define a product probability measure $\overline{\nu}^{(n)}_\lambda$ on $\Omega_n$ with its common marginal
    \begin{equation}
      \overline{\nu}^{(n)}_\lambda\!\left(\eta_x=m \right)\coloneq \frac{1}{Z^{(n)}_\lambda}\frac{d_n!(\alpha_n)}{c!(m)d_n!(\alpha_n-m)}\lambda^m \quad\text{for }\, m\in\{0,1,\cdots,\alpha_n\},
    \end{equation}
  where
  \begin{equation}
    Z^{(n)}_\lambda\coloneq \sum_{m=0}^{\alpha_n} \frac{d_n!(\alpha_n)}{c!(m)d_n!(\alpha_n-m)}\lambda^m.
  \end{equation}
  \end{definition}

  The following proposition follows from a direct computation.
  \begin{proposition}
    The SPEP is reversible w.r.t. $\overline{\nu}^{(n)}_\lambda$.
  \end{proposition}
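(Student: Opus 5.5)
The plan is to verify detailed balance for each single-particle jump, and then pass from this local identity to reversibility of the infinite-volume dynamics.

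\textbf{Step 1: detailed balance.} Since $\overline{\nu}^{(n)}_\lambda$ is a product measure, write its marginal weight as
\[
w(m)=\frac{d_n!(\alpha_n)\lambda^m}{Z^{(n)}_\lambda\, c!(m)\, d_n!(\alpha_n-m)} .
\]
Fix nearest neighbours $x,y$ and a configuration $\eta$ with $\eta_x=k\ge 1$ and $\eta_y=l\le \alpha_n-1$; otherwise the rate $c(\eta_x)d_n(\alpha_n-\eta_y)$ vanishes, because $c(0)=d_n(0)=0$. Set $\zeta=\eta^{x,y}$. The rate of the reverse move $\zeta\to\eta$ is $c(l+1)\,d_n(\alpha_n-k+1)$. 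Only the factors at $x$ and $y$ differ between the two measures, so it suffices to check
\[
w(k)w(l)\,c(k)\,d_n(\alpha_n-l)=w(k-1)w(l+1)\,c(l+1)\,d_n(\alpha_n-k+1).
\]
From the factorial definitions we have $w(k)c(k)=w(k-1)\lambda\, d_n(\alpha_n-k+1)$ and $w(l+1)c(l+1)=w(l)\lambda\, d_n(\alpha_n-l)$. Substituting these shows both sides equal $\lambda\, w(k-1)w(l)\,d_n(\alpha_n-k+1)\,d_n(\alpha_n-l)$. The case $\lambda=0$ is trivial: the measure is $\delta_0$ and nothing moves.

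\textbf{Step 2: symmetry of the generator.} For local $f,g$, write $\langle g, L_n f\rangle_{\overline{\nu}}$ as a sum over ordered pairs $(x,y)$. Each term has the form
\[
\int c(\eta_x)d_n(\alpha_n-\eta_y)\,g(\eta)\bigl[f(\eta^{x,y})-f(\eta)\bigr]\,d\overline{\nu}.
\]
Only finitely many pairs near $\mathrm{supp}(f)$ contribute, and all rates are bounded because $\eta_x\le\alpha_n$, so every sum is finite. In each term, change variables $\zeta=\eta^{x,y}$; this map is a bijection from $\{\eta_x\ge1,\ \eta_y\le\alpha_n-1\}$ onto $\{\zeta_y\ge1,\ \zeta_x\le\alpha_n-1\}$. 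By Step 1,
\[
\int c(\eta_x)d_n(\alpha_n-\eta_y)\,g(\eta)f(\eta^{x,y})\,d\overline{\nu}(\eta)=\int c(\zeta_y)d_n(\alpha_n-\zeta_x)\,g(\zeta^{y,x})f(\zeta)\,d\overline{\nu}(\zeta).
\]
Pairing the $(x,y)$ term with the $(y,x)$ term then gives $\langle g,L_nf\rangle=\langle L_ng,f\rangle$. The factor $1/c(\alpha_n)$ is harmless.

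\textbf{Step 3: from local functions to the process.} Local functions form a core for the generator, since the rates are bounded and have finite range (Liggett's construction). Symmetry on a core therefore extends to self-adjointness in $L^2(\overline{\nu}^{(n)}_\lambda)$, and this gives reversibility.

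\textbf{Main obstacle.} The only non-computational point is this core/extension step, and it is standard for bounded finite-range rates. The algebra in Step 1 is the heart of the argument but is routine.
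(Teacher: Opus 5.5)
Your proposal is correct and takes the same approach as the paper, which only says that reversibility "follows from a direct computation"; your Step 1 is exactly that detailed-balance check, using $w(m)c(m)=\lambda\,w(m-1)\,d_n(\alpha_n-m+1)$. Steps 2 and 3 make explicit the standard passage to symmetry of $L_n$ on local functions and then, via the core property, to reversibility of the infinite-volume process. In Step 2, sum over the finite set of bonds meeting $\mathrm{supp}(f)\cup\mathrm{supp}(g)$, which is closed under $(x,y)\mapsto(y,x)$, and not only over bonds near $\mathrm{supp}(f)$.
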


  Let us define $h_n(\lambda)\coloneq E_{\overline{\nu}^{(n)}_\lambda}\left[\eta_x \right]$, which represents the expected value of the number of particles per site.
  The following proposition shows that the reversible measures can be parametrized by density $h_n(\lambda)$.

  Since the following proposition can be verified by a direct computation, we omit the proof.

  \begin{proposition}
  The function $h_n$ is a strictly increasing $C^\infty$ function of $\lambda$, $h_n(0)=0$ and $\lim_{\lambda\to\infty}h_n(\lambda)=\alpha_n$ for each $n$.
  \end{proposition}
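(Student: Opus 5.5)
The plan is to view, for each fixed $n$, the marginal of $\overline{\nu}^{(n)}_\lambda$ as a one-parameter exponential family on the finite set $\{0,1,\dots,\alpha_n\}$. Put $a_m\coloneq d_n!(\alpha_n)/(c!(m)d_n!(\alpha_n-m))$ for $0\le m\le\alpha_n$. Since $c(m)>0$ for $m\ge1$ (because $c$ is strictly increasing with $c(0)=0$) and $d_n(m)>0$ for $m\ge1$, every $a_m$ is strictly positive. Then $Z^{(n)}_\lambda=\sum_{m=0}^{\alpha_n}a_m\lambda^m$ is a polynomial in $\lambda$ with positive coefficients and $a_0=1$, so $Z^{(n)}_\lambda\ge1$ on $[0,\infty)$. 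Moreover,
\begin{equation*}
h_n(\lambda)=\frac{\sum_{m=0}^{\alpha_n} m\,a_m\lambda^m}{Z^{(n)}_\lambda}=\frac{\lambda\,\partial_\lambda Z^{(n)}_\lambda}{Z^{(n)}_\lambda}.
\end{equation*}
This is a ratio of polynomials whose denominator never vanishes, so $h_n$ is $C^\infty$ (indeed real-analytic) on $[0,\infty)$.

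For the boundary values, at $\lambda=0$ the numerator vanishes and $Z^{(n)}_0=a_0=1$, which gives $h_n(0)=0$. As $\lambda\to\infty$, both numerator and denominator are dominated by their top-degree terms $\alpha_n a_{\alpha_n}\lambda^{\alpha_n}$ and $a_{\alpha_n}\lambda^{\alpha_n}$. Dividing both by $\lambda^{\alpha_n}$ then gives $h_n(\lambda)\to\alpha_n$; this uses $a_{\alpha_n}=1/c!(\alpha_n)>0$.

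For strict monotonicity, I would differentiate in $\lambda$ and use the standard exponential-family identity
\begin{equation*}
\lambda h_n'(\lambda)=E_{\overline{\nu}^{(n)}_\lambda}\big[\eta_x^2\big]-E_{\overline{\nu}^{(n)}_\lambda}\big[\eta_x\big]^2=\mathrm{Var}_{\overline{\nu}^{(n)}_\lambda}(\eta_x),
\end{equation*}
which follows directly from the quotient rule applied to $\lambda\partial_\lambda Z/Z$. For $\lambda>0$, all weights $a_m\lambda^m$ are positive and $\alpha_n\ge1$ (taking $n$ large enough that $\lfloor\alpha n^a\rfloor\ge1$; otherwise the statement is trivial, which I note explicitly). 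So the marginal is not a point mass, the variance is strictly positive, and $h_n'(\lambda)>0$ on $(0,\infty)$. Together with continuity at $0$, this yields strict increase on $[0,\infty)$.

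There is no real obstacle here. The only points needing care are the positivity of all coefficients $a_m$, which relies on $c(m)>0$ and $d_n(m)>0$ for $m\ge1$, and the degenerate case $\alpha_n=0$ for small $n$, which should be excluded or handled separately.
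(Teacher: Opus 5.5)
Your proof is correct and is the direct computation the paper omits. The identity $\lambda h_n'(\lambda)=\operatorname{Var}_{\overline{\nu}^{(n)}_\lambda}(\eta_x)$ is the same one the paper uses for $F_n$ in the proof of Lemma~\ref{lemma-cvgc-ev}. Two small corrections, neither of which affects the argument: $a_{\alpha_n}=d_n!(\alpha_n)/c!(\alpha_n)$, not $1/c!(\alpha_n)$, though only its positivity matters; and when $\alpha_n=0$, strict monotonicity actually fails ($h_n\equiv 0$) rather than holding trivially. That case is excluded anyway, since the generator divides by $c(\alpha_n)=c(0)=0$.
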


  Thus we can define $\nu^{(n)}_\rho\coloneq \overline{\nu}^{(n)}_{h^{-1}_n(\rho)}$ for $\rho\in[0,\alpha_n)$. Furthermore, let $\mathbb{P}_n$ be the underlying probability measure of the SPEP with initial distribution $\nu^{(n)}_\rho$ and let $\mathbb{E}_n$ denote the corresponding expectation.

  \subsection{Gradient condition}
  To proceed with our analysis, we introduce the instantaneous current associated with the SPEP. 
  For each $x\in\mathbb{Z}$, define the instantaneous current $W^{(n)}_{x,x+1}$ across the bond $\{x,x+1\}$ by
  \begin{equation}
    L_n\eta_x
    =
    W^{(n)}_{x-1,x}-W^{(n)}_{x,x+1}.
  \end{equation}
  A direct computation using the definition of the generator $L_n$ yields
  \begin{equation}
    W^{(n)}_{x,x+1}(\eta)
    =
    \frac{
      c(\eta_x)d_n(\alpha_n-\eta_{x+1})
      -
      c(\eta_{x+1})d_n(\alpha_n-\eta_x)
    }{c(\alpha_n)}.
  \end{equation}
  We now introduce the gradient condition.

  \begin{definition}
    We say that the SPEP is of gradient type if there exists a function
    $h\colon\{0,1,\ldots,\alpha_n\}\to\mathbb{R}$ such that
    \begin{equation}
      W^{(n)}_{x,x+1}(\eta)
      =
      h(\eta_x)-h(\eta_{x+1})
    \end{equation}
    for every $x\in\mathbb{Z}$ and $\eta\in\Omega_n$.
  \end{definition}
  As shown in \cite{d80477dd344645dc8ca9cb717946fc90}, the SPEP is of gradient type if and only if $d_n(m)$ is proportional to $c(\alpha_n)-c(\alpha_n-m)$ for $m\in\{0,1,\cdots,\alpha_n\}$. In particular, we only consider the case $d_n(m)=c(\alpha_n)-c(\alpha_n-m)$ in the sequel.
  In this case, the function $h$ in the gradient condition can be chosen as $h=c$.
  
  \subsection{Connection to the SZRP}
  The SZRP is an interacting particle system in which particles interact only with those occupying the same site. The SZRP is a Markov process on $\Omega_\infty\coloneq \mathbb{N}_0\,^{\mathbb{Z}}$ whose generator is given by
  \begin{equation}
    L_{\infty}f(\eta)\coloneq \sum_{x,y\in\mathbb{Z}\,;\,|x-y|=1}c(\eta_x)\left[f(\eta^{x,y})-f(\eta) \right].
  \end{equation}
  For the construction of the SZRP with non-decreasing jump rate $c$ in infinite volume, we refer to Section 2.6 in \cite{Kipnis_Landm} and \cite{BRSS}. 
  
  It is well-known that there exists a family of reversible product probability measures $\{\overline{\nu}_\lambda^{(\infty)}\}_{\lambda>0}$ whose marginal is
  \begin{align*}
    \overline{\nu}_\lambda^{(\infty)}\!\left(\eta_x=m \right)&=\frac{1}{Z_\lambda^{(\infty)}}\frac{\lambda^m}{c!(m)}\\
   Z_\lambda^{(\infty)}&\coloneq \sum_{m=0}^{\infty} \frac{\lambda^m}{c!(m)}.\\
  \end{align*}
  According to Section 2 of \cite{Kipnis_Landm}, if $\lim_{\lambda\to\lambda^*} Z_\lambda^{(\infty)}=\infty$ where $\lambda^*\in(0,\infty]$ is the convergence radius of $Z_\lambda^{(\infty)}$, then there exists a unique $\lambda^{(\infty)}_\rho>0$ such that $E_{\overline{\nu}^{(\infty)}_{\lambda^{(\infty)}_\rho}}\![\eta_x]=\rho$ for any $\rho>0$. We define $\nu^{(\infty)}_\rho\coloneq\overline{\nu}^{(\infty)}_{\lambda^{(\infty)}_\rho}$. In our case, we have $\lambda^*=\infty$.

  At a heuristic level, we can see that the SPEP approaches the SZRP as $n\to\infty$. Indeed, for any local function $f:\Omega_\infty\to\mathbb{R}$, we have
  \begin{align*}
    L_nf(\eta)= &\sum_{x,y\in\mathbb{Z}\,;\,|x-y|=1}c(\eta_x)\left(1-\frac{c(\eta_y)}{c(\alpha_n)} \right)\left[f(\eta^{x,y})-f(\eta) \right] \\ &\to \sum_{x,y\in\mathbb{Z}\,;\,|x-y|=1}c(\eta_x)\left[f(\eta^{x,y})-f(\eta) \right] \quad= L_\infty f(\eta).
  \end{align*}
  In the above formula, we regard $f$ as a function on $\Omega_n$ by restricting its domain to $\Omega_n\subset \Omega_\infty$. 
  
  \section{Main result}\label{sec-main-result}
  \begin{notation}
  Let us introduce some notation. Firstly, set $\chi^{(n)}(\rho)\coloneq \operatorname{Var}_{\nu^{(n)}_\rho}(\eta_x)$ for any $n\in\mathbb{N}\cup \{\infty\}$.

  Suppose that  $f\in L^1(\nu^{(\infty)}_\rho)$ is a local function (hence $f\in L^1(\nu^{(n)}_\rho)$ as well).
    \begin{itemize}
      \item $\Phi_f^{(n)}(\rho)\coloneq E_{\nu_\rho^{(n)}}[f]$
      \item $\Phi_f^{(\infty)}(\rho)\coloneq E_{\nu_\rho^{(\infty)}}[f]$
      \item $D_f^{(n)}(\rho)\coloneq \frac{d}{d\rho}\Phi_f^{(n)}(\rho)$
      \item $D_f^{(\infty)}(\rho)\coloneq \frac{d}{d\rho}\Phi_f^{(\infty)}(\rho)$
    \end{itemize}
    Furthermore, $\{\eta(t)\}_{t\in[0,T]}$ denotes a Markov process generated by $n^2L_n$ with initial distribution $\nu^{(n)}_\rho$. This is the SPEP sped up by $n^2$. Here, with a slight abuse of notation, we omit the dependence on $n$.

    In addition, let $\mathcal{B}_t$ be an $\mathcal{S}'(\mathbb{R})$-valued Brownian motion which is defined in Appendix \ref{sec-SHE}.
  \end{notation}
  Before stating the main result, let us define the density fluctuation fields and the notion of a solution to the SHE via a martingale problem.
  \begin{definition}
    We say that a stochastic process $\mathcal{X}^{(n)}\in D([0,T]:\mathcal{S}'(\mathbb{R}))$ is the density fluctuation field associated with the SPEP if for any $\phi\in\mathcal{S}(\mathbb{R})$, it holds that
    \begin{equation}
    \mathcal{X}^{(n)}_t(\phi)\coloneq \frac{1}{\sqrt{n}}\sum_{x\in\mathbb{Z}}\left(\eta_x(t)-\rho \right)\phi\!\left(\frac{x}{n}\right).
  \end{equation}
  \end{definition}
  \begin{definition}\label{def-sol-SHE}
    An $\mathcal{S}'(\mathbb{R})$-valued continuous process $\{\mathcal{Y}_t\}_{t\in[0,T]}$ is a solution to the SHE \eqref{SHE} if for any $\phi\in C([0,T]:\mathcal{S}(\mathbb{R}))\cap C^1((0,T):\mathcal{S}(\mathbb{R}))$,
    \begin{equation}
      \mathcal{Y}_t(\phi_t)-\mathcal{Y}_0(\phi_0)-\int_0^t\mathcal{Y}_s((\partial_s+D\Delta)\phi_s)\,ds
    \end{equation}
    is a continuous martingale of quadratic variation
    \begin{equation}
      2\chi D \int_0^t\langle\nabla\phi_s,\nabla\phi_s\rangle_{L^2(\mathbb{R})}ds,
    \end{equation}
    where $\phi_t=\phi(t,\cdot)$.
  \end{definition}
  It can be checked that the solution to the SHE is unique in law by using Itô's representation theorem (see \cite[Theorem 8.2]{DaPratoZabczyk} for details).

  Here is our main theorem of this manuscript. Note that $c\in L^1(\nu^{(\infty)}_\rho)$ by the assumption \eqref{fml-c-L-2}, in particular $D_c^{(\infty)}(\rho)$ and $\Phi_c^{(\infty)}(\rho)$ appearing below are well-defined.
  
  \begin{theorem}\label{Main-theorem}
  Fix a constant $\rho>0$ arbitrarily and suppose that the initial distribution of $\{\eta(t)\}_{t\in[0,T]}$ is $\nu^{(n)}_\rho$. Then the sequence $\{\mathcal{X}^{(n)}_\cdot \}_{n}$ converges in distribution on $D([0,T]:\mathcal{S}'(\mathbb{R}))$ to the solution of the SHE
  \begin{equation}\label{limit-eqn}
    d\mathcal{Y}_t= D_c^{(\infty)}(\rho)\Delta\mathcal{Y}_tdt + \sqrt{2\Phi_c^{(\infty)}(\rho)}\nabla d\mathcal{B}_t.
  \end{equation}
  \end{theorem}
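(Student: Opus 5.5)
We follow the standard martingale route of \cite{Kipnis_Landm}, carried out in three stages: the measure comparison (Section 4), the Boltzmann--Gibbs principle (Section 5), and tightness plus identification of the limit (Section 6).

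\textbf{Martingale decomposition.} For $\phi\in\mathcal{S}(\mathbb{R})$ we write
\begin{equation*}
\mathcal{M}^{(n)}_t(\phi)=\mathcal{X}^{(n)}_t(\phi)-\mathcal{X}^{(n)}_0(\phi)-\int_0^t n^2L_n\mathcal{X}^{(n)}_s(\phi)\,ds .
\end{equation*}
By the gradient condition with $h=c$, the current is $W^{(n)}_{x,x+1}=\bigl(c(\eta_x)-c(\eta_{x+1})\bigr)/c(\alpha_n)\cdot c(\alpha_n)$ up to the normalisation already in $L_n$. Summing by parts twice therefore gives
\begin{equation*}
n^2L_n\mathcal{X}^{(n)}_s(\phi)=\frac{1}{\sqrt n}\sum_x \bigl(c(\eta_x(s))-\Phi^{(n)}_c(\rho)\bigr)\Delta_n\phi(x/n),
\end{equation*}
where $\Delta_n$ is the discrete Laplacian. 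The constant $\Phi^{(n)}_c(\rho)$ may be subtracted because $\sum_x\Delta_n\phi(x/n)=0$.

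The quadratic variation is
\begin{equation*}
\frac{1}{n}\int_0^t\sum_x c(\eta_x)\Bigl(1-\tfrac{c(\eta_{x+1})}{c(\alpha_n)}\Bigr)\bigl(\nabla_n\phi(x/n)\bigr)^2\,ds
\end{equation*}
plus the symmetric term. Its $L^2(\mathbb{P}_n)$ limit should be $2\Phi^{(\infty)}_c(\rho)\|\nabla\phi\|^2_{L^2}t$. We show this by computing the mean and bounding the variance, using the stationarity of $\nu^{(n)}_\rho$ and its product structure.

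\textbf{Convergence of the invariant measures.} The needed input is Section 4's convergence $\nu^{(n)}_\rho\to\nu^{(\infty)}_\rho$ in the sense of expectations of local functions with suitable growth. Concretely, we need three things:
\begin{equation*}
\Phi^{(n)}_c(\rho)\to\Phi^{(\infty)}_c(\rho),\qquad D^{(n)}_c(\rho)\to D^{(\infty)}_c(\rho),\qquad E_{\nu^{(n)}_\rho}\!\bigl[c(\eta_x)c(\eta_{x+1})\bigr]/c(\alpha_n)\to0 .
\end{equation*}
We also need uniform-in-$n$ bounds on $E_{\nu^{(n)}_\rho}[c(\eta_x)^2]$ and on $\chi^{(n)}(\rho)\to\chi^{(\infty)}(\rho)$.

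The plan is to expand $d_n!(\alpha_n)/d_n!(\alpha_n-m)=\prod_{j<m}\bigl(c(\alpha_n)-c(j)\bigr)$. After the rescaling $\lambda=\mu/c(\alpha_n)^{\,\cdot}$ (more precisely, $\lambda c(\alpha_n)^m$ replaced by $\mu^m$), each weight becomes $\frac{\mu^m}{c!(m)}\prod_{j<m}\bigl(1-c(j)/c(\alpha_n)\bigr)$. These weights converge termwise and are dominated by $\mu^m/c!(m)$, so dominated convergence together with \eqref{fml-c-L-2} gives convergence of the partition functions and of their $\mu$-derivatives. The relation $h_n^{-1}(\rho)$ then converges by the implicit function theorem, using the strict monotonicity of the limit density. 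Derivatives in $\rho$ follow the same way, since they are covariances.

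\textbf{Boltzmann--Gibbs principle.} This is the main obstacle. We must show
\begin{equation*}
\mathbb{E}_n\Bigl[\Bigl(\int_0^t\frac{1}{\sqrt n}\sum_x\bigl\{c(\eta_x)-\Phi^{(n)}_c(\rho)-D^{(n)}_c(\rho)(\eta_x-\rho)\bigr\}\psi(x/n)\,ds\Bigr)^2\Bigr]\to0
\end{equation*}
for $\psi\in\mathcal{S}(\mathbb{R})$. We would use the Kipnis--Varadhan inequality to bound the left side by a constant times $t\,\|\cdot\|_{-1}^2$. We then replace $c(\eta_x)$ by its conditional expectation given the block average $\eta^l_x$ over a box of size $l$, with a spectral gap estimate on boxes.

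The difficulty is that the spectral gap of the SPEP on a box of size $l$ must be of order $l^{-2}$ uniformly in $n$. We would obtain this by comparison with the zero range gap \cite{Kipnis_Landm}, using the factor $1-c(\eta_y)/c(\alpha_n)$. That factor is bounded below only on configurations where $\eta_y$ is not close to $\alpha_n$, so the comparison would be done on the event of moderate occupations, whose complement has vanishing probability. Finally, we replace the block function $E[c\mid\eta^l_x]-\Phi-D(\eta^l_x-\rho)$, which is quadratic in $\eta^l_x-\rho$, using an equivalence of ensembles uniform in $n$ (again from the local convergence of measures). This yields an error of order $\sqrt{n}/l$, which vanishes for $l=\varepsilon n$ as $\varepsilon\to0$.

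\textbf{Tightness and identification.} Tightness in $D([0,T]:\mathcal{S}'(\mathbb{R}))$ follows from Mitoma's criterion and Aldous's criterion applied to each term, using the $L^2$ bounds above. The jumps of $\mathcal{X}^{(n)}$ are $O(n^{-1/2})$, so every limit point is continuous. Along any limit point we pass to the limit in the decomposition with time-dependent $\phi_s$. By the Boltzmann--Gibbs principle, the drift becomes $D^{(\infty)}_c(\rho)\mathcal{Y}_s(\Delta\phi_s)$. Uniform integrability of the martingales then makes the limit a continuous martingale with the stated quadratic variation. Uniqueness of the martingale problem in Definition \ref{def-sol-SHE} concludes the proof. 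Note that the noise coefficient $2\chi D$ equals $2\Phi^{(\infty)}_c(\rho)$, because $\chi^{(\infty)}D^{(\infty)}_c=\Phi^{(\infty)}_c$ for the zero range process; this is consistent with \eqref{limit-eqn}.
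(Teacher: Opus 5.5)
Your martingale decomposition, the quadratic-variation computation, and the measure-convergence argument (rescaling $\lambda=\mu/c(\alpha_n)$, domination by $\mu^m/c!(m)$, $D_f$ as a covariance) are essentially the paper's Sections 4 and 6. Using Aldous plus uniform integrability instead of the Jacod--Shiryaev martingale FCLT is an inessential variant.

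The gap is in your Boltzmann--Gibbs principle. You rest it on a spectral gap of order $l^{-2}$ for the SPEP on a box, uniform in $n$ and in the particle number, obtained by comparison with the zero range gap. This fails on two counts.

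First, the zero range input is only known under hypotheses such as $\sup_k|c(k+1)-c(k)|<\infty$ (Landim--Sethuraman--Varadhan). The paper allows superlinear rates with unbounded increments, such as its example $e^{m^\gamma}$.

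Second, a Poincar\'e inequality on a hyperplane $\{\sum_{x\in B}\eta_x=k\}$ is a worst-case statement on that hyperplane. You cannot run the comparison only on ``moderate occupations'' just because the other configurations are unlikely. On those configurations the factor $1-c(\eta_y)/c(\alpha_n)$ genuinely degenerates: already for $c(m)=m$ one has $1-c(\alpha_n-1)/c(\alpha_n)=1/\alpha_n$. With your choice $l=\varepsilon n$ and $a<1$, even hyperplanes with $k\le Ll$ contain such configurations.

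Likewise, an equivalence of ensembles for $\nu^{(n)}_\rho$ on boxes of size $\varepsilon n$, uniform in $n$, does not follow from convergence of expectations of fixed local functions (Lemma~\ref{ev_cvgc}). It would need its own uniform local limit theorem.

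The missing idea is that no spectral gap is needed if the block size $l$ is kept fixed while $n\to\infty$. The paper writes $\sum_{x\in B^0_j}\tau_xV^{(n)}=L^{(n)}_{B_j}f_j+\bigl(\sum_{x\in B^0_j}\tau_xV^{(n)}-L^{(n)}_{B_j}f_j\bigr)$ with a fixed bounded local $f_0$.

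\begin{itemize}
\item By Kipnis--Varadhan and the variational formula, the first piece costs at most $\frac{C}{n^2}\langle f_0,(-L^{(n)}_{B_0})f_0\rangle_{\nu^{(n)}_\rho}$. This vanishes as $n\to\infty$ for each fixed $f_0$, however large its Dirichlet form.
\item By plain Cauchy--Schwarz and block independence, the second piece is bounded by $\frac{C}{l}E_{\nu^{(n)}_\rho}[(\cdots)^2]$. Using Lemma~\ref{ev_cvgc} and $0\le r_n\le r_\infty$, this converges to its zero range analogue.
\item Taking the infimum over $f_0$ leaves the squared norm of the projection onto $(RL^{(\infty)}_{B_0})^\perp$, i.e.\ onto functions of $\eta^l_0$. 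This uses only irreducibility of the zero range dynamics on finite hyperplanes.
\item That projection is then handled by the zero range equivalence of ensembles with a density cutoff.
\end{itemize}

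Thus all $n$-dependence is absorbed by Lemma~\ref{ev_cvgc} at fixed $l$, and all large-$l$ asymptotics are done for the $n$-independent measure $\nu^{(\infty)}_\rho$.
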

  \begin{remark}
    In the case $a=0$ (i.e. $\alpha_n$ does not depend on $n$), $\Phi_c^{(n)}(\rho)$, $D_c^{(n)}(\rho)$, and $\chi^{(n)}(\rho)$ do not depend on $n$. As shown in \cite{d80477dd344645dc8ca9cb717946fc90}, the scaling limit of fluctuation fields is also the SHE
    \begin{equation}\label{GHS-limit-eq}
      d\mathcal{Y}_t= D_c^{(n)}(\rho)\Delta\mathcal{Y}_tdt + \sqrt{2D_c^{(n)}(\rho)\chi^{(n)}(\rho)}\nabla d\mathcal{B}_t.
    \end{equation}
    We can check that $\Phi_c^{(n)}(\rho)=D_c^{(n)}(\rho)\chi^{(n)}(\rho)$ for any $n\in\mathbb{N}\cup \{\infty\}$ by a direct computation. Therefore, comparing \eqref{limit-eqn} and \eqref{GHS-limit-eq}, we observe that our model interpolates between the SPEP with $a=0$ and the SZRP at the macroscopic level.
  \end{remark}

  \section{Preliminaries}\label{section-phi-D}
  This section is devoted to several auxiliary lemmas needed for the proof of our main result.
  Most of the notation used below has been introduced in Section \ref{sec:model}, to which we refer the reader.
  \subsection{Estimates related to the ZRP}
  In this subsection, we collect several estimates concerning the ZRP that will be used in the proof of the main theorem.
  \begin{lemma}\label{lemma-cvgc-part-fct}
    The following two statements hold:
    \begin{itemize}
      \item $Z_\lambda^{(n)}$ and $Z_\lambda^{(\infty)}$ are continuous and strictly increasing in $\lambda$.
      \item $\lim_{n\to\infty} Z_{\lambda/c(\alpha_n)}^{(n)}=Z_{\lambda}^{(\infty)} \quad \text{uniformly on compact sets}.$
    \end{itemize}
  \end{lemma}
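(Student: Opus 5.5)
Both statements come down to analysing the weights of $Z^{(n)}_\lambda$ term by term. With $d_n(m)=c(\alpha_n)-c(\alpha_n-m)$ we have
\[
\frac{d_n!(\alpha_n)}{d_n!(\alpha_n-m)}=\prod_{j=\alpha_n-m+1}^{\alpha_n} d_n(j)=\prod_{k=0}^{m-1}\bigl(c(\alpha_n)-c(k)\bigr),
\]
where we substitute $k=\alpha_n-j$. Hence
\[
Z^{(n)}_{\lambda/c(\alpha_n)}=\sum_{m=0}^{\alpha_n}\frac{\lambda^m}{c!(m)}\prod_{k=0}^{m-1}\Bigl(1-\frac{c(k)}{c(\alpha_n)}\Bigr).
\]
This is the identity I would use throughout.

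\textbf{First statement.} $Z^{(n)}_\lambda$ is a polynomial in $\lambda$. Each product $\prod_{k<m}(c(\alpha_n)-c(k))$ is strictly positive for $m\le\alpha_n$, because $c$ is strictly increasing and $k<\alpha_n$. So all coefficients are positive, and $Z^{(n)}_\lambda$ is continuous and strictly increasing on $[0,\infty)$, since $\alpha_n\ge1$ for large $n$.

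For $Z^{(\infty)}_\lambda$, the coefficients $1/c!(m)$ are positive. The radius of convergence is infinite because $c(m)\to\infty$, so $c!(m)^{1/m}\to\infty$. Therefore $Z^{(\infty)}_\lambda$ is a power series with positive coefficients and infinite radius, hence continuous, indeed real-analytic, and strictly increasing.

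\textbf{Second statement.} Fix a compact set $K\subset[0,\Lambda]$. Set
\[
a_m^{(n)}=\mathbbm{1}_{\{m\le\alpha_n\}}\prod_{k<m}\Bigl(1-\frac{c(k)}{c(\alpha_n)}\Bigr).
\]
Then $0\le a_m^{(n)}\le1$, since $0\le c(k)\le c(\alpha_n)$ for $k<\alpha_n$. For each fixed $m$ we have $a_m^{(n)}\to1$, because $\alpha_n\to\infty$ and $c(\alpha_n)\to\infty$. We can therefore write
\[
\bigl|Z^{(n)}_{\lambda/c(\alpha_n)}-Z^{(\infty)}_\lambda\bigr|\le\sum_{m=0}^{M}\frac{\Lambda^m}{c!(m)}\bigl|1-a_m^{(n)}\bigr|+\sum_{m>M}\frac{\Lambda^m}{c!(m)}.
\]
The tail term is uniformly small in $\lambda\in K$ once $M$ is large, by convergence of $Z^{(\infty)}_\Lambda$. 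The finite sum then tends to $0$ as $n\to\infty$, uniformly in $\lambda\in K$. This is dominated convergence with the summable dominating sequence $\Lambda^m/c!(m)$, and it gives uniform convergence on $K$.

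\textbf{Main obstacle.} The only step that needs care is the rewriting of $d_n!(\alpha_n)/d_n!(\alpha_n-m)$ as $\prod_{k<m}(c(\alpha_n)-c(k))$, where one has to get the index shift right. Once that identity holds, the bound $a_m^{(n)}\le1$ supplies a dominating sequence for free, so no use of the growth condition $\liminf c(m)/m\ge K$ is needed here; that condition presumably matters only in later lemmas.
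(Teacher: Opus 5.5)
Your proposal is correct and is essentially the paper's proof. The paper rewrites $Z^{(n)}_{\lambda/c(\alpha_n)}=\sum_{m=0}^{\alpha_n}\frac{\lambda^m}{c!(m)}\prod_{i=0}^{m-1}\bigl(1-\frac{c(i)}{c(\alpha_n)}\bigr)$ exactly as you do, calls the first item immediate, and gets uniform convergence on compacts from the summable majorant $\Lambda^m/c!(m)$ via the Weierstrass M-test; you additionally spell out the index shift, the bound $0\le a_m^{(n)}\le 1$, and the caveat $\alpha_n\ge 1$ that the paper leaves implicit (you do rely on the consequence $c(m)\to\infty$ of the growth assumption, just not on the linear rate).
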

  \begin{proof}
    The first item is immediate.
    For the second item, 
    \begin{align*}
    Z_{\lambda/c(\alpha_n)}^{(n)}&= \sum_{m=0}^{\alpha_n} \frac{1}{c!(m)}\prod_{i=0}^{m-1}\left(1-\frac{c(i)}{c(\alpha_n)} \right) \lambda^m \\ &\to \sum_{m=0}^{\infty} \frac{1}{c!(m)}\lambda^m =Z_{\lambda}^{(\infty)} \quad \text{as }\, n\to\infty. \\
    \end{align*}
    By using the Weierstrass M-test, we can show that the convergence above is uniform on compact sets.
  \end{proof}

  For notational convenience, define $F_n(\lambda)\coloneq h_n(\lambda/c(\alpha_n))$ for $n\in\mathbb{N}$, and $F_\infty(\lambda)\coloneq h_\infty(\lambda)$. 
  \begin{lemma}\label{lemma-cvgc-ev}
    
    For any $n\in\mathbb{N}\cup\{\infty\}$, $F_n(\cdot)$ is continuous and strictly increasing. In addition, $\lim_{n\to\infty}F_n(\lambda)=F_\infty(\lambda)$ uniformly on compact sets.
  \end{lemma}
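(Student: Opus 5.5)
The plan is to write $F_n$ as a logarithmic derivative of the rescaled partition function and then reuse the termwise-domination argument from Lemma \ref{lemma-cvgc-part-fct}. Put $G_n(\lambda)\coloneq Z^{(n)}_{\lambda/c(\alpha_n)}$ and $G_\infty\coloneq Z^{(\infty)}_\lambda$. From the definition of $\overline{\nu}^{(n)}_\lambda$,
\begin{equation*}
F_n(\lambda)=\frac{1}{G_n(\lambda)}\sum_{m=0}^{\alpha_n} m\, a^{(n)}_m\lambda^m=\frac{\lambda G_n'(\lambda)}{G_n(\lambda)},\qquad a^{(n)}_m\coloneq\frac{1}{c!(m)}\prod_{i=0}^{m-1}\Bigl(1-\frac{c(i)}{c(\alpha_n)}\Bigr),
\end{equation*}
which is the computation already carried out in Lemma \ref{lemma-cvgc-part-fct}. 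The analogous identity $F_\infty(\lambda)=\lambda G_\infty'(\lambda)/G_\infty(\lambda)$ holds with $a_m\coloneq 1/c!(m)$.

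\emph{Continuity and monotonicity.} For finite $n$, $F_n$ is $h_n$ composed with the increasing linear map $\lambda\mapsto\lambda/c(\alpha_n)$. The proposition on $h_n$ stated earlier then gives that $F_n$ is continuous and strictly increasing. For $n=\infty$, $G_\infty$ is an entire power series because $\lambda^*=\infty$, so $F_\infty$ is smooth. For monotonicity, the standard exponential-family identity gives
\begin{equation*}
\lambda F_\infty'(\lambda)=\operatorname{Var}_{\overline{\nu}^{(\infty)}_\lambda}(\eta_x)>0 \quad (\lambda>0),
\end{equation*}
since the measure is non-degenerate. The same identity gives an independent proof for finite $n$.

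\emph{Uniform convergence on compacts.} Fix $[0,\Lambda]$. By monotonicity of $c$, $0\le 1-c(i)/c(\alpha_n)\le 1$ for $i\le\alpha_n$, so $0\le a^{(n)}_m\le a_m$. Also $a^{(n)}_m\to a_m$ for each fixed $m$, because $\alpha_n\to\infty$ and $c(\alpha_n)\to\infty$. The series $\sum_m m\,a_m\Lambda^m$ converges, since $\lambda^*=\infty$ (the ratio $\Lambda/c(m)\to0$). The Weierstrass M-test, applied exactly as in Lemma \ref{lemma-cvgc-part-fct} with $m a_m\Lambda^m$ as dominating sequence, then gives $\sum_m m a^{(n)}_m\lambda^m\to\sum_m m a_m\lambda^m$ uniformly on $[0,\Lambda]$. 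Here $a^{(n)}_m$ is set to $0$ for $m>\alpha_n$; this is automatic anyway, since the factor $i=\alpha_n$ vanishes.

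\emph{Passing to the quotient.} Lemma \ref{lemma-cvgc-part-fct} gives $G_n\to G_\infty$ uniformly on $[0,\Lambda]$. Every denominator satisfies $G_n\ge a^{(n)}_0=1$ and $G_\infty\ge1$. Hence
\begin{equation*}
|F_n-F_\infty|\le |N_n-N_\infty|+N_\infty\,|G_n-G_\infty| \quad\text{on }[0,\Lambda],
\end{equation*}
where $N_n$ denotes the numerator $\sum_m m\,a^{(n)}_m\lambda^m$, and $N_\infty$ is bounded on $[0,\Lambda]$. Both terms tend to $0$ uniformly, which proves the claim.

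I expect no step to be a real obstacle. The only points needing care are the summability of $\sum m a_m\Lambda^m$, which follows from $c(m)\to\infty$ or directly from the growth assumption on $c$, and the domination $a^{(n)}_m\le a_m$, which relies on $c$ being increasing.
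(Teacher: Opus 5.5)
Your proof is correct and follows the paper's argument. Like the paper, you write $F_n$ as a ratio of power series, use the variance identity $\lambda F_n'(\lambda)=\operatorname{Var}(\eta_x)>0$ for monotonicity, and get the uniform convergence from termwise domination combined with Lemma \ref{lemma-cvgc-part-fct}. You also make explicit what the paper leaves implicit: the bound $0\le a^{(n)}_m\le a_m$, the lower bound $G_n\ge 1$ used to pass to the quotient, and a direct proof that $F_\infty$ is increasing, where the paper instead cites Kipnis--Landim.
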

  \begin{proof}
    For the proof that $F_\infty( \lambda)$ is continuous and strictly increasing, see \cite[Section 3]{Kipnis_Landm}.

    Next we consider $F_n(\lambda)$. A straightforward computation yields
    \begin{equation}
      \partial_\lambda F_n\!\left(\lambda \right)=\frac{\operatorname{Var}_{\overline{\nu}^{(n)}_\lambda} \!\left(\eta_x\right)}{\lambda}>0.
    \end{equation}
    Thus $F_n(\lambda)$ is strictly increasing in $\lambda$ and the continuity is obvious.

    Finally, let us prove that $\lim_{n\to\infty}F_n(\lambda)=F_\infty(\lambda)$ uniformly on compact sets. By the definition of $F_n(\lambda)$, 
    \begin{equation}
      F_n(\lambda)=\frac{1}{Z_{\lambda/c(\alpha_n)}^{(n)}}\sum_{m=0}^{\alpha_n}\frac{m}{c!(m)}\prod_{i=0}^{m-1}\left(1-\frac{c(i)}{c(\alpha_n)} \right) \lambda^m.
    \end{equation}
    Therefore we have the statement combining Lemma~\ref{lemma-cvgc-part-fct} and the Weierstrass M-test.
  \end{proof}

  Thanks to Lemma \ref{lemma-cvgc-ev}, there exists a nonnegative number $\lambda^{(n)}_\rho$ $( n\in\mathbb{N}\cup\{\infty\} )$ satisfying 
  \begin{equation}
      F_n(\lambda^{(n)}_\rho) = \rho,
  \end{equation}
  for any $\rho>0$ with $\alpha_n>\rho$. 

  \begin{lemma}\label{lemma-bdd-parameter}
    $\{\lambda^{(n)}_\rho \mid n\in\mathbb{N}\cup\{\infty\}\text{ with }\alpha_n>R,\,\, \rho\in[0,R]\}$ is a bounded subset of $\mathbb{R}$.
  \end{lemma}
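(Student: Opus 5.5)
Since each $F_n$ is strictly increasing (Lemma~\ref{lemma-cvgc-ev}), $F_n(\lambda^{(n)}_\rho)=\rho\le R$ gives $\lambda^{(n)}_\rho\le \lambda^{(n)}_R$ for every admissible $n$ and $\rho\in[0,R]$. So it suffices to bound $\lambda^{(n)}_R$ uniformly in $n\in\mathbb{N}\cup\{\infty\}$ with $\alpha_n>R$; the lower bound $0$ is automatic. The strategy is to find a single $\Lambda$ with $F_n(\Lambda)>R$ for all large $n$, which by monotonicity forces $\lambda^{(n)}_R<\Lambda$.

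First, $\lim_{\lambda\to\infty}F_\infty(\lambda)=\infty$. This holds because $\lambda^*=\infty$ and $Z^{(\infty)}_\lambda\to\infty$, and then \cite[Section 2]{Kipnis_Landm} shows that the density ranges over all of $(0,\infty)$. Concretely, for each $\rho>0$ there is a unique $\lambda^{(\infty)}_\rho$ with density $\rho$, so $F_\infty$ is unbounded. Hence we may choose $\Lambda$ with $F_\infty(\Lambda)\ge R+1$.

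Next, Lemma~\ref{lemma-cvgc-ev} gives $F_n(\Lambda)\to F_\infty(\Lambda)$, so there is $N$ with $F_n(\Lambda)>R$ for all $n\ge N$. For these $n$ (with $\alpha_n>R$) we get $\lambda^{(n)}_R<\Lambda$. The value for $n=\infty$ is the single number $\lambda^{(\infty)}_R$.

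The finitely many remaining indices $n<N$ with $\alpha_n>R$ are handled one at a time. For each such $n$, $F_n(\lambda)\to\alpha_n>R$ as $\lambda\to\infty$ (the proposition on $h_n$), so $\lambda^{(n)}_R$ is a finite number. Taking the maximum of these finitely many values together with $\Lambda$ and $\lambda^{(\infty)}_R$ gives the bound.

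The only point needing care is the existence of $\Lambda$, i.e.\ the unboundedness of $F_\infty$. The convergence $F_n\to F_\infty$ is only pointwise and uniform on compacts, so it gives no control at large $\lambda$; we therefore fix $\Lambda$ first and use convergence only at that one point. If the citation feels too indirect, unboundedness can be checked directly. For any $M$, the ratio
\[
\frac{\sum_{m\le M}\lambda^m/c!(m)}{Z^{(\infty)}_\lambda}
\]
tends to $0$ as $\lambda\to\infty$, because $Z^{(\infty)}_\lambda$ has infinitely many nonzero terms and so grows faster than any polynomial. Consequently $F_\infty(\lambda)\ge M\bigl(1-o(1)\bigr)$.
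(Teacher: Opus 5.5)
Your proof is correct and is essentially the paper's argument. The paper takes $\Lambda=\lambda^{(\infty)}_{R+1}$, uses the pointwise convergence $F_n(\Lambda)\to R+1$ to get $\lambda^{(n)}_R<\Lambda$ for $n\ge n_R$, and bounds the finitely many remaining $\lambda^{(n)}_R$ by their maximum; your direct check that $F_\infty$ is unbounded simply makes explicit the existence of $\lambda^{(\infty)}_{R+1}$, which the paper takes from the cited result.
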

  \begin{proof}
    Since $\lambda^{(n)}_\rho$ is increasing in $\rho$, it suffices to show that $\{\lambda^{(n)}_R\,:\alpha_n>R \}$ is a bounded subset.

    \begin{align*}
      \lim_{n\to\infty} F_n\!\left(\lambda^{(\infty)}_{R+1} \right)=F_\infty\!\left(\lambda^{(\infty)}_{R+1}\right)=R+1.
    \end{align*}
    Hence there exists sufficiently large $n_R\in\mathbb{N}$ such that $F_n(\lambda^{(\infty)}_{R+1})>R+\frac{1}{2}$ for any $n\geq n_R$. Therefore we have
    \begin{align*}
      \sup_{n\in\mathbb{N}}\lambda^{(n)}_R\leq \max_{n\leq n_R}\lambda^{(n)}_R \vee \lambda^{(\infty)}_{R+1},
    \end{align*}
    which concludes the proof.
  \end{proof}

  \begin{lemma}\label{lemma-cvgc-parameter}
    $\lim_{n\to\infty}\lambda^{(n)}_\rho=\lambda^{(\infty)}_\rho$\quad \text{uniformly on compact sets}.
  \end{lemma}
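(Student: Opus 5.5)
We need $\lambda^{(n)}_\rho\to\lambda^{(\infty)}_\rho$ uniformly for $\rho$ in a compact set $[r,R]$ (or $[0,R]$, where $\lambda_0^{(n)}=0$ for every $n$). We treat $\lambda^{(n)}_\rho=F_n^{-1}(\rho)$ as the inverse of a strictly increasing continuous function and use the locally uniform convergence $F_n\to F_\infty$ from Lemma~\ref{lemma-cvgc-ev}. The standard argument for inverses of monotone functions then applies, once we know the relevant parameters stay in a fixed compact set. Lemma~\ref{lemma-bdd-parameter} provides this.

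Fix $R>0$. By Lemma~\ref{lemma-bdd-parameter}, there is $M<\infty$ with $\lambda^{(n)}_\rho\in[0,M]$ for all $\rho\in[0,R]$ and all $n\in\mathbb{N}\cup\{\infty\}$ with $\alpha_n>R$. This holds for all large $n$, since $\alpha_n\to\infty$. Set $K=[0,M+1]$. On $K$ the function $F_\infty$ is continuous and strictly increasing, hence uniformly continuous, and $F_\infty^{-1}$ is uniformly continuous on $F_\infty(K)$. Let $\varepsilon\in(0,1)$ and define
\[
\delta(\varepsilon)\coloneq\inf_{\lambda\in[0,M]}\bigl(F_\infty(\lambda+\varepsilon)-F_\infty(\lambda)\bigr).
\]
By compactness and strict monotonicity, $\delta(\varepsilon)>0$. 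A symmetric quantity handles the lower side, with the lower endpoint $0$ requiring only trivial care because $\lambda\ge0$.

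Pick $n_0$ so that $\sup_{\lambda\in K}|F_n(\lambda)-F_\infty(\lambda)|<\delta(\varepsilon)/2$ for $n\ge n_0$, which Lemma~\ref{lemma-cvgc-ev} allows. For $\rho\in[0,R]$, write $\mu=\lambda^{(\infty)}_\rho\in[0,M]$. Then
\[
F_n(\mu+\varepsilon)>F_\infty(\mu+\varepsilon)-\delta/2\ge F_\infty(\mu)+\delta/2=\rho+\delta/2>\rho=F_n(\lambda^{(n)}_\rho).
\]
Since $F_n$ is increasing, $\lambda^{(n)}_\rho<\mu+\varepsilon$. If $\mu\ge\varepsilon$, the same reasoning gives $F_n(\mu-\varepsilon)<\rho$, hence $\lambda^{(n)}_\rho>\mu-\varepsilon$. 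If $\mu<\varepsilon$, then $\lambda^{(n)}_\rho\ge0>\mu-\varepsilon$ trivially. Thus $|\lambda^{(n)}_\rho-\lambda^{(\infty)}_\rho|<\varepsilon$ uniformly in $\rho\in[0,R]$ for $n\ge n_0$.

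The main point to handle carefully is the uniform positivity of $\delta(\varepsilon)$ together with keeping all parameters inside the compact $K$. This is why Lemma~\ref{lemma-bdd-parameter} is invoked first: without it, uniform convergence of $F_n$ only on compacts would not control $\lambda^{(n)}_\rho$. Everything else is routine monotonicity.
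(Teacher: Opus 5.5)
Your proof is correct. It uses the same ingredients as the paper: locally uniform convergence $F_n\to F_\infty$ from Lemma~\ref{lemma-cvgc-ev}, and strict monotonicity and continuity of $F_\infty$. The difference is where you evaluate $F_n$.

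The paper evaluates at the moving point $\lambda^{(n)}_\rho$. It bounds $|F_\infty(\lambda^{(n)}_\rho)-\rho|=|F_\infty(\lambda^{(n)}_\rho)-F_n(\lambda^{(n)}_\rho)|$ by $\sup_{[0,M_R]}|F_\infty-F_n|$, which is why it needs the a priori bound of Lemma~\ref{lemma-bdd-parameter}. It then concludes through continuity of $F_\infty^{-1}$, reducing uniform to pointwise convergence via monotonicity in $\rho$.

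You instead evaluate $F_n$ only at the fixed points $\lambda^{(\infty)}_\rho\pm\varepsilon$, and let the monotonicity of $F_n$ locate $\lambda^{(n)}_\rho$. The uniform gap $\delta(\varepsilon)$ gives uniformity over $\rho\in[0,R]$ directly, with no Pólya-type reduction.

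Your closing remark is not right for your own argument, though. Your proof does not need Lemma~\ref{lemma-bdd-parameter}. The only compactness you use is $\lambda^{(\infty)}_\rho\le\lambda^{(\infty)}_R$ for $\rho\le R$, which follows from the monotonicity of $F_\infty$, so you can take $M=\lambda^{(\infty)}_R$. The a priori bound on $\lambda^{(n)}_\rho$ is needed only in the paper's route, where $F_n-F_\infty$ is evaluated at $\lambda^{(n)}_\rho$ itself. In fact, the paper's proof of Lemma~\ref{lemma-bdd-parameter} is a one-sided instance of your sandwich, so your argument absorbs that lemma.

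You also do not need a separate quantity for the lower side. When $\mu\ge\varepsilon$ you have $\mu-\varepsilon\in[0,M]$, so the same $\delta(\varepsilon)$ gives $F_\infty(\mu-\varepsilon)\le\rho-\delta(\varepsilon)$.
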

  \begin{proof}
    Since $\lambda^{(n)}_\rho$ is increasing in $\rho$, it is enough to check the pointwise convergence. 
    Moreover, since we have Lemma \ref{lemma-cvgc-ev}, it suffices to see
    \begin{equation}
      \lim_{n\to\infty} F_\infty(\lambda^{(n)}_\rho) = F_\infty(\lambda^{(\infty)}_\rho) \quad(=\rho).
    \end{equation}
    By Lemma \ref{lemma-bdd-parameter}, $M_R\coloneq\sup\{\lambda^{(n)}_\rho \mid n\in\mathbb{N}\cup\{\infty\}\text{ with }\alpha_n>R,\,\, \rho\in[0,R]\}<\infty$. Thus
    \begin{align*}
      |F_\infty(\lambda^{(n)}_\rho)-\rho|&= |F_\infty(\lambda^{(n)}_\rho)-F_n(\lambda^{(n)}_\rho)|\\
      &\leq \sup_{\lambda\in[0,M_R]} |F_\infty(\lambda)-F_n(\lambda) |.
    \end{align*}
    The rightmost term tends to 0 thanks to Lemma \ref{lemma-cvgc-ev}.
  \end{proof}

  \begin{lemma}\label{ev_cvgc}
    Let $f$ be a local function on $\Omega_\infty$ that belongs to $L^1(\overline{\nu}^{(\infty)}_{\lambda})$. Note that $f$ can also be regarded as a local function on $\Omega_n$, and as such belongs to $L^1(\overline{\nu}_{\lambda}^{(n)})$. Then, 
    \begin{align}\label{lem5-cvgc}
    \lim_{n\to\infty}\mathbb{E}_{\,\overline{\nu}^{(n)}_{\lambda}}\!\left[ f\right]=\mathbb{E}_{\,\overline{\nu}^{(\infty)}_{\lambda}}\!\left[ f \right].
    \end{align}
    Moreover, there exists a constant $C>0$ which does not depend on $n$ and $f$ (but may depend on $\lambda$) such that
    \begin{equation}\label{lem5-bound}
      \mathbb{E}_{\,\overline{\nu}^{(n)}_{\lambda}}\!\left[\lvert f\rvert \right]\leq C\mathbb{E}_{\,\overline{\nu}^{(\infty)}_{\lambda}}\!\left[\lvert f\rvert \right]\quad\text{uniformly on compact sets}.
    \end{equation}
  \end{lemma}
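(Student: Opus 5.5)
Throughout, I read the $n$-th measure with the same rescaled fugacity as in Lemma~\ref{lemma-cvgc-part-fct}, namely $\overline{\nu}^{(n)}_{\lambda/c(\alpha_n)}$. Without this rescaling the weights $d_n!(\alpha_n)/d_n!(\alpha_n-m)$ carry a factor of order $c(\alpha_n)^m$, and no limit can be expected. The plan is to compare the two product measures site by site through an explicit Radon--Nikodym density, and then apply dominated convergence.

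\emph{Step 1: the one-site density.} Since $d_n(j)=c(\alpha_n)-c(\alpha_n-j)$, we have $d_n!(\alpha_n)/d_n!(\alpha_n-m)=\prod_{i=0}^{m-1}\bigl(c(\alpha_n)-c(i)\bigr)$. Hence, as in the proof of Lemma~\ref{lemma-cvgc-part-fct}, the one-site marginal of $\overline{\nu}^{(n)}_{\lambda/c(\alpha_n)}$ equals $\overline{\nu}^{(\infty)}_\lambda(\eta_x=m)\, r_n(m)$, where
\begin{equation*}
r_n(m)\coloneq \frac{Z^{(\infty)}_\lambda}{Z^{(n)}_{\lambda/c(\alpha_n)}}\prod_{i=0}^{m-1}\Bigl(1-\frac{c(i)}{c(\alpha_n)}\Bigr)\mathbbm{1}\{m\le\alpha_n\}.
\end{equation*}
Because $c$ is increasing, each factor in the product lies in $[0,1]$. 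Also $Z^{(n)}_{\lambda/c(\alpha_n)}\ge 1$ (the $m=0$ term), so
\begin{equation*}
0\le r_n(m)\le Z^{(\infty)}_\lambda\le Z^{(\infty)}_{\Lambda}
\end{equation*}
for all $\lambda\in[0,\Lambda]$, using the monotonicity in Lemma~\ref{lemma-cvgc-part-fct}. Moreover, $r_n(m)\to 1$ for each fixed $m$: the product tends to $1$ since $c(\alpha_n)\to\infty$, and the prefactor tends to $1$ by Lemma~\ref{lemma-cvgc-part-fct}. This convergence is uniform in $\lambda$ on compact sets, since the prefactor converges uniformly there and the product does not depend on $\lambda$.

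\emph{Step 2: product structure and the bound \eqref{lem5-bound}.} Let $A=\mathrm{supp}(f)$ with $|A|=k$. Both measures are product measures, so
\begin{equation*}
\mathbb{E}_{\overline{\nu}^{(n)}}[g]=\mathbb{E}_{\overline{\nu}^{(\infty)}_\lambda}\Bigl[g\prod_{x\in A}r_n(\eta_x)\Bigr]
\end{equation*}
for every local $g$ supported in $A$. The bound of Step 1 then gives \eqref{lem5-bound} with $C=(Z^{(\infty)}_\Lambda)^k$. This constant is uniform in $n$ and in $\lambda\in[0,\Lambda]$. It does, however, depend on $f$ through the size of its support; as far as I can see this is unavoidable, and it is harmless because the local functions used later have supports of bounded size. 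If $f$ is regarded as a function on $\Omega_n$ only through its values on $\{0,\dots,\alpha_n\}^A$, the indicator in $r_n$ handles the configurations that are missing.

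\emph{Step 3: the convergence \eqref{lem5-cvgc}.} The integrand $f\prod_{x\in A}r_n(\eta_x)$ converges pointwise to $f$ and is dominated by $C|f|\in L^1(\overline{\nu}^{(\infty)}_\lambda)$. Dominated convergence therefore gives the limit. For uniformity in $\lambda$ on compacts, I would bound
\begin{equation*}
\Bigl|\prod_{x\in A} r_n(\eta_x)-1\Bigr|
\end{equation*}
uniformly in $\lambda\in[0,\Lambda]$ by a quantity $\varepsilon_n(\eta)$ that tends to $0$ pointwise and is bounded by $C+1$. I would also dominate $|f|\,\overline{\nu}^{(\infty)}_\lambda$ by the $\lambda$-independent weight $|f(\eta)|\prod_{x\in A}\Lambda^{\eta_x}/c!(\eta_x)$, which is summable whenever $f\in L^1(\overline{\nu}^{(\infty)}_\Lambda)$.

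The main obstacle is exactly this uniformity, because integrability of $f$ is assumed only at a single $\lambda$. I expect to handle it by assuming $f\in L^1(\overline{\nu}^{(\infty)}_{\Lambda})$ at the top of the compact interval; this assumption is automatic for the polynomially or $c$-bounded $f$ used later, thanks to \eqref{fml-c-L-2}. The Weierstrass M-test argument of Lemma~\ref{lemma-cvgc-ev} then applies.
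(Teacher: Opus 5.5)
Your proof is correct and is essentially the paper's argument: the paper likewise writes the expectation as a finite sum with weights $\frac{1}{c!(\eta_x)}\prod_{i<\eta_x}\bigl(1-c(i)/c(\alpha_n)\bigr)\lambda^{\eta_x}$, which means it implicitly works at the rescaled fugacity $\lambda/c(\alpha_n)$ exactly as you assumed. It then dominates these weights by a constant times the ZRP weights using $Z^{(n)}\ge 1$, and concludes by dominated convergence. Your remark that the constant must depend on the size of $\mathrm{supp}\, f$ is also right (the paper's $M$ actually enters as $M^{l}$, with $l$ the support length), and your extra treatment of uniformity of the limit in $\lambda$ goes beyond what the statement requires.
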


  \begin{proof}
    Without loss of generality we can assume that $f\geq0$ and $\text{supp}\,\,f\subset \{0,1,\ldots,l-1\}$ for a positive integer $l$.
    
    Let $I(n,m)\coloneq \prod_{i=0}^{m-1} \left[1-\frac{c(i)}{c(\alpha_n)} \right]$ for $m,n\in\mathbb{N}$ with $n\geq m$ and let
    \begin{align*}
      M\coloneq\sup_{n\in\mathbb{N}}\frac{Z^{(\infty)}_\lambda}{Z^{(n)}_\lambda}.
    \end{align*}
    This is finite since $Z^{(n)}_\lambda\geq1$. Then,
    \begin{equation}
      \begin{split}
        \mathbb{E}_{\,\overline{\nu}^{(n)}_{\lambda}}\!\left[ f \right]=\sum_{\eta_0=0}^{\alpha_n}\sum_{\eta_1=0}^{\alpha_n}\cdots\sum_{\eta_{l-1}=0}^{\alpha_n}f(\eta) \prod_{m=0}^{l-1} \frac{1}{Z^{(n)}_\lambda}\frac{1}{c!(\eta_m)}I(n,m) \lambda^m.
      \end{split}
    \end{equation}
    Here, 
    \begin{equation}\label{lem5-dom}
      0\leq f(\eta) \prod_{m=0}^{l-1} \frac{1}{Z^{(n)}_\lambda}\frac{1}{c!(\eta_m)}I(n,m) \lambda^m
      \leq Mf(\eta) \prod_{m=0}^{l-1} \frac{1}{Z^{(\infty)}_\lambda}\frac{1}{c!(\eta_m)} \lambda^m,
    \end{equation}
    and the dominated convergence theorem yields \eqref{lem5-cvgc}. Moreover, \eqref{lem5-dom} yields the bound \eqref{lem5-bound}. 
  \end{proof}

  \subsection{Estimates and convergence results of $\Phi_f^{(n)}(\rho)$ and $D_f^{(n)}(\rho)$}
  In this section, we establish the convergence of $\Phi_f^{(n)}(\rho)$ and $D_f^{(n)}(\rho)$ to $\Phi_f^{(\infty)}(\rho)$ and $D_f^{(\infty)}(\rho)$ respectively. These results are crucial in the proof of the main theorem.
  
  The following lemma follows from Lemma \ref{ev_cvgc} and the definition of $\nu^{(n)}_\rho$ and $\nu^{(\infty)}_\rho$.

  \begin{lemma}\label{lemma-cvgc-Phi}
    Let $f\in L^1(\nu^{(\infty)}_\rho)$ be a local function on $\Omega_\infty$. Then we have
    \begin{equation}
      \lim_{n\to\infty}\Phi^{(n)}_{f}(\rho)=\Phi^{(\infty)}_{f}(\rho).
    \end{equation}
  \end{lemma}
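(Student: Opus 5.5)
Write $\Phi^{(n)}_f(\rho)$ in terms of the fugacity: $\nu^{(n)}_\rho=\overline{\nu}^{(n)}_{h_n^{-1}(\rho)}$. Since $F_n(\lambda)=h_n(\lambda/c(\alpha_n))$, we have $h_n^{-1}(\rho)=\lambda^{(n)}_\rho/c(\alpha_n)$. Hence
\begin{equation*}
\Phi^{(n)}_f(\rho)=\mathbb{E}_{\overline{\nu}^{(n)}_{\lambda^{(n)}_\rho/c(\alpha_n)}}[f],
\qquad
\Phi^{(\infty)}_f(\rho)=\mathbb{E}_{\overline{\nu}^{(\infty)}_{\lambda^{(\infty)}_\rho}}[f].
\end{equation*}
We must therefore combine two limits: the convergence of the fugacities $\lambda^{(n)}_\rho\to\lambda^{(\infty)}_\rho$ (Lemma~\ref{lemma-cvgc-parameter}), and the convergence of expectations at a fixed rescaled fugacity. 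The latter is the content of Lemma~\ref{ev_cvgc}, where the marginal weight of $\overline{\nu}^{(n)}$ should be read with $\lambda$ replaced by $\lambda/c(\alpha_n)$, as in its proof via $I(n,m)$. This is why we need a version of Lemma~\ref{ev_cvgc} that is uniform in $\lambda$ over a compact set.

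We may assume $f\ge0$ with support in $\{0,\dots,l-1\}$, by splitting $f=f^+-f^-$. Write
\begin{equation*}
\mathbb{E}_{n}[f]=\sum_{\eta\in\mathbb{N}_0^l}f(\eta)\mathbbm{1}_{\{\eta_i\le\alpha_n\ \forall i\}}\prod_{i=0}^{l-1}\frac{I(n,\eta_i)\,(\lambda^{(n)}_\rho)^{\eta_i}}{Z^{(n)}_{\lambda^{(n)}_\rho/c(\alpha_n)}\,c!(\eta_i)}.
\end{equation*}
We then check three things.

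\begin{itemize}
\item[(i)] \emph{Pointwise convergence of each summand.} For fixed $\eta$, we have $I(n,\eta_i)\to1$ and $\lambda^{(n)}_\rho\to\lambda^{(\infty)}_\rho$ by Lemma~\ref{lemma-cvgc-parameter}. Also $Z^{(n)}_{\lambda^{(n)}_\rho/c(\alpha_n)}\to Z^{(\infty)}_{\lambda^{(\infty)}_\rho}$, because Lemma~\ref{lemma-cvgc-part-fct} gives uniform convergence on compacts and $Z^{(\infty)}$ is continuous.

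\item[(ii)] \emph{Bounded fugacities.} By Lemma~\ref{lemma-bdd-parameter}, $\lambda^{(n)}_\rho\le M_\rho<\infty$ for all large $n$.

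\item[(iii)] \emph{A dominating function.} Using $0\le I\le1$ and $Z^{(n)}\ge1$, each summand is bounded by $f(\eta)\prod_i M_\rho^{\eta_i}/c!(\eta_i)$.
\end{itemize}

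Dominated convergence then gives the claim, provided the bound in (iii) is summable.

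The main obstacle is exactly this summability. The hypothesis is only $f\in L^1(\nu^{(\infty)}_\rho)$, i.e. $\sum f(\eta)\prod_i(\lambda^{(\infty)}_\rho)^{\eta_i}/c!(\eta_i)<\infty$. The dominating weight uses $M_\rho$, which may exceed $\lambda^{(\infty)}_\rho$.

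To fix this, I would sharpen (ii). For any $\varepsilon>0$, Lemma~\ref{lemma-cvgc-parameter} gives $\lambda^{(n)}_\rho\le\lambda^{(\infty)}_\rho(1+\varepsilon)$ for $n$ large, but an arbitrary nonnegative $f$ integrable at one fugacity need not be integrable at a larger one. The better route is to compare the weights $I(n,m)(\lambda^{(n)}_\rho)^m$ with $(\lambda^{(\infty)}_\rho)^m$ directly. Where $\lambda^{(n)}_\rho\le\lambda^{(\infty)}_\rho$ there is nothing to do. In general I would exploit $I(n,m)\le\exp\bigl(-\sum_{i<m}c(i)/c(\alpha_n)\bigr)$, together with the fact that $F_n(\lambda)\le F_\infty(\lambda)$ should hold, which would force $\lambda^{(n)}_\rho\ge\lambda^{(\infty)}_\rho$. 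The exclusion constraint lowers the density, so reaching density $\rho$ requires a larger fugacity, and this is exactly the direction that must be controlled.

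In that direction I would use the ratio $\lambda^{(n)}_\rho/\lambda^{(\infty)}_\rho\to1$ together with the factor $I(n,m)$ to bound $I(n,m)(\lambda^{(n)}_\rho/\lambda^{(\infty)}_\rho)^m\le C$ uniformly in $m\le\alpha_n$. If this uniform bound fails, I would fall back on the paper's implicit convention, as in Lemma~\ref{ev_cvgc}'s bound \eqref{lem5-bound} being asserted "uniformly on compact sets". That convention amounts to reading the integrability assumption as holding on a neighbourhood of $\lambda^{(\infty)}_\rho$, which is the case for the functions actually used, such as $c$ and polynomials in $\eta$ by \eqref{fml-c-L-2}. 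Under it, (iii) is summable with $M_\rho=\lambda^{(\infty)}_\rho+\delta$, and dominated convergence concludes.
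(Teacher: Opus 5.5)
\paragraph{There is a genuine gap in step (iii).}
Your reduction $\Phi^{(n)}_f(\rho)=\mathbb{E}_{\overline{\nu}^{(n)}_{\lambda^{(n)}_\rho/c(\alpha_n)}}[f]$ and steps (i)--(ii) are fine. You have also correctly isolated the point that the paper's one-line reduction to Lemma~\ref{ev_cvgc} leaves unaddressed: that lemma is a statement at a \emph{fixed} fugacity, whereas here the fugacity moves. It moves in the unfavourable direction, as you guessed. Indeed $I(n,m)\mathbbm{1}_{\{m\le\alpha_n\}}$ is non-increasing in $m$, so $F_n\le F_\infty$ and $\lambda^{(n)}_\rho\ge\lambda^{(\infty)}_\rho$.

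However, your proposal does not close this, so step (iii), the only non-routine step, is missing. Set $r_n=\lambda^{(n)}_\rho/\lambda^{(\infty)}_\rho$. The bound $\sup_{m\le\alpha_n}I(n,m)r_n^m\le C$ does \emph{not} follow from $r_n\to1$ alone. Take $c(m)=m$ and $r_n=1+\varepsilon_n$, and choose $m\approx\varepsilon_n\alpha_n/2$. Then $\log\bigl(I(n,m)r_n^m\bigr)\ge\varepsilon_n^2\alpha_n/4$ for large $n$, which is unbounded if, e.g., $\varepsilon_n=\alpha_n^{-1/4}$.

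Your fallback, which assumes integrability at some fugacity above $\lambda^{(\infty)}_\rho$, proves a different and weaker lemma. The stated hypothesis does not give it: $f(\eta)=c!(\eta_0)(\lambda^{(\infty)}_\rho)^{-\eta_0}(1+\eta_0)^{-2}$ lies in $L^1(\nu^{(\infty)}_\rho)$ but is not integrable at any larger fugacity.

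\paragraph{What is missing is a rate, and it is available.}
The needed estimate is $\lambda^{(n)}_\rho-\lambda^{(\infty)}_\rho=O(1/c(\alpha_n))$.

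First, write $g_n(m)=I(n,m)\mathbbm{1}_{\{m\le\alpha_n\}}$. Then $0\le 1-g_n(m)\le (m+1)c(m)/c(\alpha_n)$ for all $m$. For $m\le\alpha_n$ this uses $1-\prod_i(1-x_i)\le\sum_i x_i$ and $c(i)\le c(m)$; for $m>\alpha_n$ it uses $c(m)\ge c(\alpha_n)$.

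Second, $\sum_m (1+m)^2c(m)\lambda^m/c!(m)<\infty$ locally uniformly in $\lambda$, by the growth assumptions on $c$. Since all partition functions are $\ge1$, this gives $\sup_{\lambda\le M_\rho}|F_n(\lambda)-F_\infty(\lambda)|\le C/c(\alpha_n)$.

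Third, $F_\infty'(\lambda)=\operatorname{Var}_{\overline{\nu}^{(\infty)}_\lambda}(\eta_0)/\lambda\ge\delta>0$ on $[\lambda^{(\infty)}_\rho,M_\rho]$. Apply the mean value theorem to $F_\infty(\lambda^{(n)}_\rho)-F_\infty(\lambda^{(\infty)}_\rho)=F_\infty(\lambda^{(n)}_\rho)-F_n(\lambda^{(n)}_\rho)$. This yields $\log r_n\le r_n-1\le C/c(\alpha_n)$.

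Consequently
\[
I(n,m)\,r_n^m\le\prod_{i<m}\exp\Bigl(\log r_n-\frac{c(i)}{c(\alpha_n)}\Bigr)\le r_n^{N},
\]
where $N$ is the number of $i$ with $c(i)<C$. Every other factor is at most $1$, and $N$ is finite and independent of $n$ because $c$ increases to $\infty$.

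This is exactly the uniform bound you wanted. With it, the summands in (iii) are dominated by $(C')^{l}f(\eta)\prod_i(\lambda^{(\infty)}_\rho)^{\eta_i}/c!(\eta_i)$. That function is summable precisely under $f\in L^1(\nu^{(\infty)}_\rho)$, so dominated convergence proves the lemma as stated, with no strengthened hypothesis. As a consistency check, for $c(m)=m$ the measure $\nu^{(n)}_\rho$ is binomial and $r_n=(1-\rho/\alpha_n)^{-1}$.
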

  
  We now turn to the convergence of $D^{(n)}_f(\rho)$. The following lemma plays a crucial role.

  \begin{lemma}\label{lem-chi-times-D-equal-Phi}
    Suppose that $l\in\mathbb{N}$ and $f$ is a local function on $\Omega_n$ with $\operatorname{supp} f\subset \{0,1,\cdots,l-1\}$. Then, for any $\rho\in (0,\alpha_n)$, we have
    \begin{equation}
      \chi^{(n)}(\rho)D^{(n)}_f(\rho)=\operatorname{Cov}_{\nu^{(n)}_\rho}\!\left(f, \sum_{x=0}^{l-1}\eta_x\right).
    \end{equation}
  \end{lemma}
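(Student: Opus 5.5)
We work in the grand-canonical parametrization by the fugacity, $\Psi(\lambda)\coloneq E_{\overline{\nu}^{(n)}_\lambda}[f]$, and pass to the density afterwards by the chain rule. Since $\nu^{(n)}_\rho=\overline{\nu}^{(n)}_{h_n^{-1}(\rho)}$, we have $\Phi^{(n)}_f(\rho)=\Psi(h_n^{-1}(\rho))$. Hence
\begin{equation*}
D^{(n)}_f(\rho)=\frac{\Psi'(\lambda)}{h_n'(\lambda)}\quad\text{with }\lambda=h_n^{-1}(\rho)\in(0,\infty),
\end{equation*}
where $h_n'(\lambda)>0$ by the proposition that $h_n$ is strictly increasing and smooth (equivalently, by the formula for $\partial_\lambda F_n$ in the proof of Lemma~\ref{lemma-cvgc-ev}). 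The plan is to compute $\Psi'$ and $h_n'$ explicitly as covariances.

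First, we write $E_{\overline{\nu}^{(n)}_\lambda}[f]$ as a finite sum. Since $\Omega_n$ is finite per site and $\operatorname{supp} f\subset\{0,\ldots,l-1\}$,
\begin{equation*}
\Psi(\lambda)=\frac{1}{(Z^{(n)}_\lambda)^l}\sum_{\eta_0,\ldots,\eta_{l-1}=0}^{\alpha_n} f(\eta)\,\lambda^{\sum_{x=0}^{l-1}\eta_x}\prod_{x=0}^{l-1} w_n(\eta_x),
\end{equation*}
where $w_n(m)=d_n!(\alpha_n)/(c!(m)d_n!(\alpha_n-m))$. Because the sum is finite, differentiation in $\lambda$ is legitimate termwise, and no integrability issues arise. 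The derivative of $\lambda^{S}$ with $S\coloneq\sum_{x=0}^{l-1}\eta_x$ gives $\lambda^{-1}E[fS]$. The derivative of $(Z^{(n)}_\lambda)^{-l}$ gives $-l\,\lambda^{-1}h_n(\lambda)E[f]$, using $\lambda\,\partial_\lambda\log Z^{(n)}_\lambda=h_n(\lambda)$. Since $E[S]=l\,h_n(\lambda)$ by the product structure, this yields
\begin{equation*}
\Psi'(\lambda)=\frac{1}{\lambda}\operatorname{Cov}_{\overline{\nu}^{(n)}_\lambda}(f,S).
\end{equation*}
Applying the same computation to $f=\eta_0$ gives $h_n'(\lambda)=\lambda^{-1}\operatorname{Var}_{\overline{\nu}^{(n)}_\lambda}(\eta_0)$, which matches the expression for $\partial_\lambda F_n$ in the proof of Lemma~\ref{lemma-cvgc-ev}.

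Dividing, the factors $\lambda^{-1}$ cancel, and at $\lambda=h_n^{-1}(\rho)$ we get
\begin{equation*}
D^{(n)}_f(\rho)=\frac{\operatorname{Cov}_{\nu^{(n)}_\rho}(f,S)}{\chi^{(n)}(\rho)}.
\end{equation*}
Multiplying by $\chi^{(n)}(\rho)$ gives the claim.

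There is no serious obstacle. The only points needing care are these. First, $\rho\in(0,\alpha_n)$ ensures $\lambda\in(0,\infty)$, so $\lambda^{-1}$ is harmless. Second, $\chi^{(n)}(\rho)>0$ because the marginal is nondegenerate for $\lambda>0$. Third, the inverse-function derivative is valid since $h_n'>0$. The main step to write out cleanly is the logarithmic derivative of the normalization, $\lambda\,\partial_\lambda\log Z^{(n)}_\lambda=h_n(\lambda)$. This is immediate from the definition of $Z^{(n)}_\lambda$ as a polynomial in $\lambda$.
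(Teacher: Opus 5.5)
Your proposal is correct and is essentially the paper's argument. Both expand $E_{\overline{\nu}^{(n)}_\lambda}[f]$ as a finite sum, differentiate in the fugacity using $\lambda\,\partial_\lambda\log Z^{(n)}_\lambda=h_n(\lambda)$ to get $\lambda^{-1}\operatorname{Cov}_{\overline{\nu}^{(n)}_\lambda}\bigl(f,\sum_{x=0}^{l-1}\eta_x\bigr)$, and pass to the density by the chain rule with $d\lambda/d\rho=\lambda/\chi^{(n)}(\rho)$.

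The differences are cosmetic. You differentiate $\lambda^{S}(Z^{(n)}_\lambda)^{-l}$ at once rather than site by site, and you justify $d\lambda/d\rho$ explicitly via $h_n'(\lambda)=\lambda^{-1}\operatorname{Var}_{\overline{\nu}^{(n)}_\lambda}(\eta_0)>0$. By parametrizing with $h_n^{-1}(\rho)$, you also avoid the paper's harmless notational slip of evaluating at $\lambda^{(n)}_\rho$, which is defined through $F_n$ and so carries an extra factor $c(\alpha_n)$.
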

  \begin{proof}
    For any $\lambda>0$ and any $m\in\{0,1,\cdots,\alpha_n\}$, we define
    \begin{equation}
      \overline{\Phi}^{(n)}_f(\lambda)\coloneq E_{\overline{\nu}^{(n)}_\lambda}\!\left[f \right]
    \end{equation}
    and
    \begin{equation}
      J(n,m)\coloneq \frac{d_n!(\alpha_n)}{c!(m)d_n!(\alpha_n-m)}.
    \end{equation}
    Then we have
    \begin{align}
      \overline{\Phi}^{(n)}_f(\lambda)=\sum_{\eta_0,\eta_1,\cdots,\eta_{l-1}=0}^{\alpha_n}f(\eta)\prod_{x=0}^{l-1}J(n,\eta_x)\frac{\lambda^{\eta_x}}{Z^{(n)}_\lambda}.
    \end{align}
    Differentiating both sides with respect to $\lambda$, we obtain
    \begin{align}
      \frac{d}{d\lambda}\overline{\Phi}^{(n)}_f(\lambda)&=\sum_{y=0}^{l-1}\sum_{\eta_0,\eta_1,\cdots,\eta_{l-1}=0}^{\alpha_n}f(\eta)\prod_{x\in\{0,1,\cdots,l-1\}\setminus\{y\}}J(n,\eta_x)\frac{\lambda^{\eta_x}}{Z^{(n)}_\lambda}\cdot J(n,\eta_y)\frac{d}{d\lambda}\left(\frac{\lambda^{\eta_y}}{Z^{(n)}_\lambda}\right)\\
      &=\sum_{y=0}^{l-1}\sum_{\eta_0,\eta_1,\cdots,\eta_{l-1}=0}^{\alpha_n}f(\eta)\prod_{x\in\{0,1,\cdots,l-1\}\setminus\{y\}}J(n,\eta_x)\frac{\lambda^{\eta_x}}{Z^{(n)}_\lambda}\cdot J(n,\eta_y)\frac{\eta_y Z^{(n)}_\lambda-\lambda \frac{d}{d\lambda}Z^{(n)}_\lambda}{(Z^{(n)}_\lambda)^2}\lambda^{\eta_y-1}\\
      &=\frac{1}{\lambda}\sum_{y=0}^{l-1}\left\{E_{\overline{\nu}^{(n)}_\lambda}\!\left[f\eta_y \right]-E_{\overline{\nu}^{(n)}_\lambda}\!\left[f \right]E_{\overline{\nu}^{(n)}_\lambda}\!\left[\eta_y \right] \right\}\\
      &=\frac{1}{\lambda}\sum_{y=0}^{l-1}\operatorname{Cov}_{\overline{\nu}^{(n)}_\lambda}\!\left(f,\eta_y \right).\\
      &= \frac{1}{\lambda}\operatorname{Cov}_{\overline{\nu}^{(n)}_\lambda}\!\left(f,\sum_{x=0}^{l-1}\eta_x \right).
    \end{align}
    In the third equality, we used the fact that $E_{\overline{\nu}^{(n)}_\lambda}\!\left[\eta_y \right]=\lambda \frac{d}{d\lambda}Z^{(n)}_\lambda/Z^{(n)}_\lambda$.
    Combining the above with the definition of $D^{(n)}_f(\rho)$, we obtain
    \begin{align}
      D^{(n)}_f(\rho)&=\frac{d}{d\rho}\Phi^{(n)}_f(\rho)\\
      &=\frac{d}{d\rho}\lambda^{(n)}_\rho\frac{d}{d\lambda}\overline{\Phi}^{(n)}_f(\lambda^{(n)}_\rho)\cdot\\
      &=\frac{\lambda^{(n)}_\rho}{\chi^{(n)}(\rho)}\cdot \frac{1}{\lambda^{(n)}_\rho}\operatorname{Cov}_{\overline{\nu}^{(n)}_{\lambda^{(n)}_\rho}}\!\left(f,\sum_{x=0}^{l-1}\eta_x \right)\\
      &=\frac{1}{\chi^{(n)}(\rho)}\operatorname{Cov}_{\nu^{(n)}_\rho}\!\left(f,\sum_{x=0}^{l-1}\eta_x \right).
    \end{align}
    Therefore we have the desired result.
  \end{proof}

  In the case $n=\infty$, we have the following lemma.
  \begin{lemma}\label{lem-chi-times-D-equal-Phi-infty}
    Suppose that $l\in\mathbb{N}$, $R>0$ and that $f\in L^1(\nu^{(\infty)}_R)$ is a local function with $\text{supp}f\subset\{0,1,\cdots,l-1\}$. Then, for any $\rho\in(0,R)$, 
    \begin{equation}
      \chi^{(\infty)}(\rho)D^{(\infty)}_f(\rho)=\operatorname{Cov}_{\nu^{(\infty)}_\rho}\!\left(f, \sum_{x=0}^{l-1}\eta_x\right).
    \end{equation}
  \end{lemma}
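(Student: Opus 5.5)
We follow the proof of Lemma~\ref{lem-chi-times-D-equal-Phi}, now with infinite sums over $\mathbb{N}_0$. Set $\overline{\Phi}^{(\infty)}_f(\lambda)\coloneq E_{\overline{\nu}^{(\infty)}_\lambda}[f]$ and write
\begin{equation*}
  \overline{\Phi}^{(\infty)}_f(\lambda)=\sum_{\eta_0,\ldots,\eta_{l-1}=0}^{\infty}f(\eta)\prod_{x=0}^{l-1}\frac{\lambda^{\eta_x}}{c!(\eta_x)Z^{(\infty)}_\lambda}.
\end{equation*}
The only genuinely new point compared with the finite case is justifying term-by-term differentiation of this series in $\lambda$. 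Once that is done, the computation is identical. Using $E_{\overline{\nu}^{(\infty)}_\lambda}[\eta_y]=\lambda\,\partial_\lambda Z^{(\infty)}_\lambda/Z^{(\infty)}_\lambda$, the derivative is $\lambda^{-1}\operatorname{Cov}_{\overline{\nu}^{(\infty)}_\lambda}\bigl(f,\sum_{x=0}^{l-1}\eta_x\bigr)$. The chain rule with $\frac{d}{d\rho}\lambda^{(\infty)}_\rho=\lambda^{(\infty)}_\rho/\chi^{(\infty)}(\rho)$, which is the $f=\eta_0$ case of the same identity together with the inverse function theorem, then gives the claim.

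For the differentiation, fix $\rho\in(0,R)$, so that $\lambda\coloneq\lambda^{(\infty)}_\rho<\lambda^{(\infty)}_R\eqqcolon\Lambda$ by strict monotonicity of $F_\infty$ (Lemma~\ref{lemma-cvgc-ev}). Choose $\lambda_1\in(\lambda,\Lambda)$ and work on the interval $[\lambda_0,\lambda_1]$ with $0<\lambda_0<\lambda$. The $\lambda$-derivative of the summand is bounded in absolute value by
\begin{equation*}
  C\,|f(\eta)|\Bigl(\sum_x\eta_x+1\Bigr)\prod_x\frac{\lambda_1^{\eta_x}}{c!(\eta_x)},
\end{equation*}
with $C$ depending on $\lambda_0,\lambda_1$ and $l$. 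Here we also use that $Z^{(\infty)}_\lambda$ is smooth, with $1\le Z^{(\infty)}_\lambda\le Z^{(\infty)}_{\lambda_1}$, since $Z^{(\infty)}$ is entire ($\lambda^*=\infty$). Since $(\sum_x\eta_x+1)(\lambda_1/\Lambda)^{\sum_x\eta_x}$ is bounded, this dominating function is at most a constant times $|f(\eta)|\prod_x\Lambda^{\eta_x}/c!(\eta_x)$. That expression is summable because $f\in L^1(\nu^{(\infty)}_R)=L^1(\overline{\nu}^{(\infty)}_\Lambda)$.

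The same domination yields the remaining facts. First, $f\in L^1(\overline{\nu}^{(\infty)}_\lambda)$, and $f\eta_y$ is integrable, so the covariance is finite. Second, we may differentiate under the sum by the standard dominated-derivative theorem. Third, the application to $f=\eta_0$ is legitimate because all moments of $\eta_0$ are finite for an entire $Z^{(\infty)}$. Finally, $\chi^{(\infty)}(\rho)>0$, so the division is valid.

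I expect the main (though modest) obstacle to be exactly this uniform domination. The key trick is to absorb the polynomial factor $\sum\eta_x$ into the geometric gap $(\lambda_1/\Lambda)^{\sum\eta_x}$, which is possible only because $\rho<R$ is strict. This is why the hypothesis is stated with $L^1(\nu^{(\infty)}_R)$ rather than $L^1(\nu^{(\infty)}_\rho)$.
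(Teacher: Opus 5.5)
Your proposal is correct and follows essentially the paper's route. You redo the computation of Lemma~\ref{lem-chi-times-D-equal-Phi} with infinite sums and use the strict gap $\lambda^{(\infty)}_\rho<\lambda^{(\infty)}_R$ to absorb the polynomial factor into a geometric ratio, exactly as the paper does to show $\eta_x f\in L^1(\nu^{(\infty)}_\rho)$. You go slightly further than the paper by applying the same domination on $[\lambda_0,\lambda_1]$ to justify term-by-term differentiation and the formula for $\frac{d}{d\rho}\lambda^{(\infty)}_\rho$, which the paper dismisses as ``similar'' and omits.
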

  \begin{proof}
  The proof is similar to that of Lemma \ref{lem-chi-times-D-equal-Phi}. We omit the details.

  However, we need to check that the covariance $\operatorname{Cov}_{\nu^{(\infty)}_\rho}\!\left(f, \sum_{x=0}^{l-1}\eta_x\right)$ is finite. In order to see this, it suffices to show that $\eta_xf\in L^1(\nu^{(\infty)}_\rho)$ for all $x\in\{0,1,\cdots,l-1\}$. Without loss of generality, we can assume that $f\geq0$. Then, by the definition of $\nu^{(\infty)}_\rho$, we have
  \begin{align}
    E_{\nu^{(\infty)}_\rho}\!\left[\eta_xf \right]&=\sum_{\eta_0,\eta_1,\cdots,\eta_{l-1}=0}^{\infty}\eta_xf(\eta)\prod_{y=0}^{l-1}\frac{1}{c!(\eta_y)}\frac{\lambda^{(\infty)\eta_y}_\rho}{\overline{Z}^{(\infty)}_\rho}\\
    &\leq \sum_{\eta_0,\eta_1,\cdots,\eta_{l-1}=0}^{\infty}f(\eta) \eta_x\left(\frac{\lambda^{(\infty)}_\rho}{\lambda^{(\infty)}_R} \right)^{\eta_x} \prod_{y=0}^{l-1}\frac{1}{c!(\eta_y)}\frac{\lambda^{(\infty)\eta_y}_R}{\overline{Z}^{(\infty)}_\rho}\\
    &\leq C(\rho,R) E_{\nu^{(\infty)}_R}\!\left[f \right]<\infty,
  \end{align}
  where $C(\rho,R)$ is a positive constant depending on $\rho$ and $R$.
  \end{proof}

  \begin{lemma}\label{lemma-cvgc-D}
    Suppose that $R>0$ and that $f\in L^1(\nu^{(\infty)}_R)$ is a local function. Then, for any $\rho\in(0,R)$, 
    \begin{equation}
      \lim_{n\to\infty}D_f^{(n)}(\rho)=D_f^{(\infty)}(\rho).
    \end{equation}
  \end{lemma}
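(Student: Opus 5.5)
The plan is to use the two covariance identities, Lemma~\ref{lem-chi-times-D-equal-Phi} and Lemma~\ref{lem-chi-times-D-equal-Phi-infty}. Take $l$ with $\operatorname{supp} f\subset\{0,\dots,l-1\}$ (after a shift) and $n$ large enough that $\alpha_n>R$. These lemmas give
\[
D^{(n)}_f(\rho)=\frac{\operatorname{Cov}_{\nu^{(n)}_\rho}\bigl(f,\sum_{x=0}^{l-1}\eta_x\bigr)}{\chi^{(n)}(\rho)},\qquad
D^{(\infty)}_f(\rho)=\frac{\operatorname{Cov}_{\nu^{(\infty)}_\rho}\bigl(f,\sum_{x=0}^{l-1}\eta_x\bigr)}{\chi^{(\infty)}(\rho)}.
\]
It therefore suffices to prove three convergences as $n\to\infty$: $\chi^{(n)}(\rho)\to\chi^{(\infty)}(\rho)$, $E_{\nu^{(n)}_\rho}[f\eta_x]\to E_{\nu^{(\infty)}_\rho}[f\eta_x]$ for each $x$, and $E_{\nu^{(n)}_\rho}[f]\to E_{\nu^{(\infty)}_\rho}[f]$. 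The limit $\chi^{(\infty)}(\rho)$ is strictly positive since $\nu^{(\infty)}_\rho$ is not a point mass, so the quotient converges.

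All three are expectations of local functions, namely $f$, $\eta_x f$, $\eta_0^2$ and $\eta_0$. The terms $E[f]$ and $E[\eta_0]=\rho$ follow directly from Lemma~\ref{lemma-cvgc-Phi}. For $\eta_0^2$ and $\eta_x f$, I would first check that they lie in $L^1(\nu^{(\infty)}_\rho)$. For $\eta_0^2$ this holds because $Z^{(\infty)}$ has infinite radius of convergence. For $\eta_xf$ it is the estimate in the proof of Lemma~\ref{lem-chi-times-D-equal-Phi-infty}, which uses $\rho<R$ and $f\in L^1(\nu^{(\infty)}_R)$. Lemma~\ref{lemma-cvgc-Phi} then applies to them as well.

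The main obstacle is justifying Lemma~\ref{lemma-cvgc-Phi}, since its parameter $\lambda$ moves with $n$. Strictly, $\nu^{(n)}_\rho=\overline{\nu}^{(n)}_{\lambda^{(n)}_\rho/c(\alpha_n)}$, and the weights are $\frac{1}{c!(m)}I(n,m)(\lambda^{(n)}_\rho)^m/Z^{(n)}_{\lambda^{(n)}_\rho/c(\alpha_n)}$. I would proceed as follows.

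\begin{itemize}
\item Fix $\rho<\rho'<R$. By Lemma~\ref{lemma-cvgc-parameter} and strict monotonicity, for $n$ large we have $\lambda^{(n)}_\rho\le\lambda':=\tfrac12(\lambda^{(\infty)}_\rho+\lambda^{(\infty)}_{\rho'})<\lambda^{(\infty)}_{\rho'}$.
\item Bound the weights by $(\lambda')^m/c!(m)$, using $0\le I(n,m)\le1$ and $Z\ge1$.
\item This gives a summable dominating function: $|g|$ times the product of these weights is at most $Z^{(\infty)}_{\lambda'}$ times the $\nu^{(\infty)}_{\rho'}$-weights, multiplied by $(\lambda'/\lambda^{(\infty)}_{\rho'})^{\sum\eta}$, a geometric factor. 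This factor absorbs the extra $\eta_x$ in $g=\eta_xf$, exactly as in Lemma~\ref{lem-chi-times-D-equal-Phi-infty}.
\item For pointwise convergence of the weights, combine $I(n,m)\to1$, $\lambda^{(n)}_\rho\to\lambda^{(\infty)}_\rho$ (Lemma~\ref{lemma-cvgc-parameter}) and $Z^{(n)}_{\lambda^{(n)}_\rho/c(\alpha_n)}\to Z^{(\infty)}_{\lambda^{(\infty)}_\rho}$. The last follows from the locally uniform convergence in Lemma~\ref{lemma-cvgc-part-fct} together with continuity of $Z^{(\infty)}$.
\end{itemize}

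Dominated convergence then gives all the required limits. Substituting them into the covariance formulas yields $D^{(n)}_f(\rho)\to D^{(\infty)}_f(\rho)$.
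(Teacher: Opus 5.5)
Your proposal is correct and follows the paper's route: the covariance identities of Lemmas~\ref{lem-chi-times-D-equal-Phi} and~\ref{lem-chi-times-D-equal-Phi-infty}, combined with convergence of expectations of local functions (Lemma~\ref{ev_cvgc}) and $\chi^{(n)}(\rho)\to\chi^{(\infty)}(\rho)>0$. Your dominated-convergence argument for the moving fugacity $\lambda^{(n)}_\rho$, using the slack $\rho<\rho'<R$ to absorb the extra factor $\eta_x$, makes explicit what the paper leaves implicit; one small correction is that the prefactor in your dominating bound should be $(Z^{(\infty)}_{\lambda^{(\infty)}_{\rho'}})^l$ rather than $Z^{(\infty)}_{\lambda'}$, which is harmless.
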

  \begin{proof}
    Combining Lemmas \ref{ev_cvgc}, \ref{lem-chi-times-D-equal-Phi} and \ref{lem-chi-times-D-equal-Phi-infty}, we have the desired result. Note that $\chi^{(n)}(\rho)>0$ for any $n\in \mathbb{N}\cup\{\infty \}$.
  \end{proof}

  \section{The first-order Boltzmann--Gibbs principle}
  In this section, we prove the first-order Boltzmann–-Gibbs principle, which plays a crucial role in the proof of our main theorem.

  \begin{theorem}\label{BGP}
    For any local function $f\in L^2(\nu^{(\infty)}_\rho)$ and for any $\phi\in C([0,T]:\mathcal{S}(\mathbb{R}))$, 
    \begin{equation}\label{BGP-fml}
    \begin{split}
      \lim_{n\to\infty} \mathbb{E}_n\!\biggl[\,\sup_{t\in[0,T]}& \biggl|\int_0^t \frac{1}{\sqrt{n}}\sum_{x\in\mathbb{Z}}\bigl\{\tau_x f(\eta(s)) \\ 
      &- \Phi_f^{(n)}(\rho) - D^{(n)}_f\!(\rho)\left(\eta_x(s)-\rho \right) \bigr\}\phi\!\left(s,\frac{x}{n} \right)\,ds \biggr|^2 \biggr]= 0,
    \end{split}
    \end{equation}
    where
  \begin{align*}
    \tau_x f(\eta)&\coloneq f(\tau_x \eta)\\
    \tau_x \eta_y &\coloneq \eta_{x+y} \quad\text{for any}\,\,y\in\mathbb{Z}.
  \end{align*}
  \end{theorem}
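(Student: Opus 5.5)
We follow the classical Brox--Rost/Chang approach to the Boltzmann--Gibbs principle, but with every constant tracked uniformly in $n$ through Lemmas~\ref{ev_cvgc}--\ref{lemma-cvgc-D}. Write $V_f(\eta)\coloneq f(\eta)-\Phi^{(n)}_f(\rho)-D^{(n)}_f(\rho)(\eta_0-\rho)$, so that the integrand is $n^{-1/2}\sum_x \tau_xV_f\,\phi(s,x/n)$. By Lemma~\ref{lem-chi-times-D-equal-Phi}, $V_f$ has mean zero and is orthogonal under $\nu^{(n)}_\rho$ to $\sum_{x\in\Lambda}\eta_x$ for any box $\Lambda$ containing the support and $0$. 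This orthogonality is what allows the variational (Kipnis--Varadhan) estimates to bite.

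\textbf{Step 1: Kipnis--Varadhan inequality.} For a stationary reversible process, $\mathbb{E}_n[\sup_{t\le T}|\int_0^t G(s,\eta(s))ds|^2]\le C\int_0^T\|G(s,\cdot)\|_{-1,n}^2\,ds$, where the $H_{-1}$ norm is taken with respect to $n^2L_n$.

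\textbf{Step 2: block replacement.} Fix $l$ and replace $\tau_xV_f$ by its conditional expectation given the sum $\sum_{y\in\Lambda_l(x)}\eta_y$ over a box of size $l$. The difference $\tau_xV_f-E[\tau_xV_f\mid\cdot]$ has zero mean on every canonical hyperplane of the box. Its $H_{-1}$ norm is therefore bounded through a spectral gap estimate for the SPEP on the box, with a gap of order $l^{-2}$ uniformly in $n$ and the particle number. This gives a contribution of order $l^2/n\cdot C(f)$, which vanishes as $n\to\infty$ for fixed $l$. To obtain the gap uniformly in $n$ we note that the rates $c(\eta_x)(1-c(\eta_y)/c(\alpha_n))$ dominate a fraction of the ZRP rates on configurations with bounded block mass. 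Alternatively, we compare with the SZRP gap using the strictly increasing $c$ with linear growth (Landim--Sethuraman--Varadhan). Where the block mass exceeds a cutoff, we use the $L^2$ bounds from Lemma~\ref{ev_cvgc}. This uniform spectral gap is the main obstacle, since the exclusion factor degenerates near $\eta_y\approx\alpha_n$. We handle it by truncating at a level $A$ with $A\ll\alpha_n$. On the truncated region the rates are at least $\tfrac12$ of the ZRP rates for large $n$. The high-density region is controlled by large-deviation bounds on the block mass that are uniform in $n$ by Lemma~\ref{ev_cvgc}.

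\textbf{Step 3: the averaged term.} The remaining term is $n^{-1/2}\sum_x\psi_l(\bar\eta^l_x)\phi$, where $\psi_l(m)=E[V_f\mid \sum_{\Lambda_l}\eta=m]$. By the equivalence of ensembles, $\psi_l(m)=\Phi^{(n)}_f(m/l)-\Phi^{(n)}_f(\rho)-D^{(n)}_f(\rho)(m/l-\rho)+O(1/l)$. A second-order Taylor expansion then gives $\psi_l\approx O((\bar\eta^l-\rho)^2)+O(1/l)$, with constants uniform in $n$ by Lemmas~\ref{lemma-cvgc-Phi} and~\ref{lemma-cvgc-D} and the bounded-parameter Lemma~\ref{lemma-bdd-parameter}. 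We bound this term by Cauchy--Schwarz and stationarity. Since $\psi_l(\bar\eta^l_x)$ is orthogonal to linear functions, disjoint blocks are independent, and overlapping blocks contribute at most $l$ times each, the bound is $T^2\, l\cdot \sup_n E_{\nu^{(n)}_\rho}[\psi_l^2]\lesssim T^2 l\cdot l^{-2}=T^2/l$. This is where the fourth-moment bound $E[(\bar\eta^l-\rho)^4]\lesssim l^{-2}$ is needed uniformly in $n$, which Lemma~\ref{ev_cvgc} provides.

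\textbf{Step 4: conclusion.} Letting $n\to\infty$ and then $l\to\infty$ yields \eqref{BGP-fml}. Time dependence of $\phi$ causes no difficulty, since $\phi\in C([0,T]:\mathcal{S})$ gives bounds on $n^{-1}\sum_x\phi(s,x/n)^2$ that are uniform in $s$.
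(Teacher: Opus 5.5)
Your route is genuinely different from the paper's: Kipnis--Varadhan, then a one-block replacement controlled by a spectral gap, then equivalence of ensembles and a Taylor expansion for the block average. Its skeleton can be made to work, but the two places where you claim uniformity in $n$ are not justified as written.

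\textbf{The spectral gap.} The Landim--Sethuraman--Varadhan estimate needs bounded increments, $\sup_k|c(k+1)-c(k)|<\infty$ (together with $c(k+k_0)-c(k)\ge a_0$). Monotonicity and linear lower growth are not enough, and the standing assumptions explicitly allow $c(m)=\exp(m^\gamma)$. So there is no citable $l^{-2}$ gap uniform in the particle number even for the SZRP, let alone for the SPEP uniformly in $n$. Comparing rates alone would not suffice either, since the canonical measures also change with $n$.

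You do not actually need a quantitative gap, because $n\to\infty$ is taken at fixed $l$. Cut off in the block \emph{sum} at $Ml$, so that both pieces stay mean zero on every hyperplane. Once $\alpha_n>Ml$, there are finitely many relevant hyperplanes, each carrying a finite irreducible chain. On a hyperplane the SPEP canonical weights are proportional to $\prod_x c!(\eta_x)^{-1}\prod_{i<\eta_x}(1-c(i)/c(\alpha_n))$, because the factor $c(\alpha_n)^{\sum_x\eta_x}$ cancels. Since also $r_n\to r_\infty$, the block gaps converge to the positive SZRP gaps and are bounded below by some $\gamma(l,M)>0$. The one-block term then vanishes as $n\to\infty$ for fixed $l,M$. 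The high-mass piece is bounded crudely by $CT^2l\,E[V^2\mathbbm{1}_{\{\bar\eta^l>M\}}]$ and disappears as $M\to\infty$ at fixed $l$; take limits in the order $n$, $M$, $l$.

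\textbf{Equivalence of ensembles.} Lemmas~\ref{lemma-cvgc-Phi}, \ref{lemma-cvgc-D} and \ref{lemma-bdd-parameter} only give pointwise convergence of $\Phi^{(n)}_f(\rho)$ and $D^{(n)}_f(\rho)$, plus bounded fugacities. They give neither an $O(1/l)$ equivalence of ensembles for the SPEP canonical measures uniformly in $n$, nor uniform bounds on $(\Phi^{(n)}_f)''$. Your Step 3 also has no density cutoff, which is indispensable since $\bar\eta^l$ ranges up to $\alpha_n$.

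The repair is again to let $n\to\infty$ first. At fixed $l$ the canonical measures converge configuration by configuration. With the domination of Lemma~\ref{ev_cvgc} you then get $\limsup_n E_{\nu^{(n)}_\rho}[\psi_l^2]=E_{\nu^{(\infty)}_\rho}[(\psi^{(\infty)}_l)^2]$. After that, only the classical SZRP equivalence of ensembles below a level $L>\rho$ is needed.

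In your bound the part above $L$ carries the factor $l$, so you need $l\,E[(\psi^{(\infty)}_l)^2\mathbbm{1}_{\{\bar\eta^l>L\}}]\to0$. This holds for bounded $f$ by the exponential tails of $\bar\eta^l$. General $f\in L^2$ follows by approximation, since the whole functional is bounded by $CT^2l_0\|V_f\|^2_{L^2(\nu^{(n)}_\rho)}$. The paper's estimate of $(\mathrm{IX})$ needs the same care.

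\textbf{Comparison with the paper.} The paper never uses a spectral gap. It works on disjoint blocks and writes $\sum_{x\in B^0_j}\tau_xV^{(n)}=L^{(n)}_{B_j}f_j+(\text{remainder})$. The range term is removed by Kipnis--Varadhan at cost $Cn^{-2}\langle f_0,-L^{(n)}_{B_0}f_0\rangle$, and the remainder is bounded crudely by $\frac{C}{l}E[(\cdot)^2]$. It then lets $n\to\infty$ at fixed $l$ and $f_0$ (dominated convergence with $0\le r_n\le r_\infty$) \emph{before} optimizing over $f_0$. The optimum is the projection onto $\ker L^{(\infty)}_{B_0}$, i.e.\ conditioning on $\eta^l_0$.

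So the paper needs only ergodicity of the SZRP on a block and the SZRP equivalence of ensembles. The SPEP enters only through a fixed-block limit, which is exactly the reduction that repairs your argument. A spectral-gap formulation like yours would pay off only with a gap uniform in $l$, $n$ and the density, allowing $l=l(n)\to\infty$. That is what the present hypotheses on $c$ do not provide.
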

  \subsection{Kipnis--Varadhan inequality}
  We first introduce Sobolev norms associated with the generator $L_n$. For any $h\in L^2(\nu^{(n)}_\rho)$, define $\|\cdot\|_{1,n}$ by
  \begin{equation}
    \|h\|_{1,n}^2 \coloneq \left<h,(-n^2L_n)h\right>_{\nu^{(n)}_\rho}.
  \end{equation}
  In addition, for $h\in L^2(\nu^{(n)}_\rho)$ with $E_{\nu^{(n)}_\rho}[h]=0$, define
  \begin{equation}\label{dual-sobolev}
    \|h\|^2_{-1,n}
    \coloneq
    \sup\left\{
      2\left<h,g\right>_{\nu^{(n)}_\rho}
      - n^2\left<g,(-L_n)g\right>_{\nu^{(n)}_\rho}
      \,:\,
      g\colon\Omega_n\to\mathbb{R}
      \text{ is a local function}
    \right\}.
  \end{equation}
  The following Kipnis--Varadhan inequality provides an estimate for additive
  functionals of the stationary process in terms of this norm.

  \begin{proposition}{(Kipnis--Varadhan inequality)}\label{KVI}
    Let $f\colon[0,T]\to L^2(\nu^{(n)}_\rho)$ with $E_{\nu^{(n)}_\rho}[f(s,\cdot)]=0$ for all $s\in[0,T]$.
    Then there exists a universal constant $C_0>0$ such that
    \begin{equation}\label{Kipnis--Varadhan}
      \mathbb{E}_n\!\left[
        \sup_{t\in[0,T]}
        \left|
          \int_0^t f(s,\eta(s))\,ds
        \right|^2
      \right]
      \leq
      C_0
      \int_0^T
        \|f(s,\cdot)\|^2_{-1,n}\,ds.
    \end{equation}
    In particular, we can take $C_0=27$.
  \end{proposition}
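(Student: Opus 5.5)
The proof follows the classical route of Kipnis--Varadhan and Chang--Landim--Olla, using only stationarity and reversibility of $\nu^{(n)}_\rho$ under $L_n$. Fix a local function $g$ and write $V(s,\eta)=f(s,\eta)$. The starting point is a martingale. Since $g$ is local and bounded on $\Omega_n$, which is finite at each site, Dynkin's formula shows that
\begin{equation*}
M^g_t=g(\eta(t))-g(\eta(0))-\int_0^t n^2L_ng(\eta(s))\,ds
\end{equation*}
is a martingale with quadratic variation $\int_0^t \Gamma_n g(\eta(s))\,ds$, where $\Gamma_n g=n^2(L_ng^2-2gL_ng)$. By stationarity, $\mathbb{E}_n[\langle M^g\rangle_T]=2T\|g\|_{1,n}^2$.

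The main device is the forward--backward martingale decomposition. The time-reversed process $\hat\eta(s)=\eta(T-s)$ is again the SPEP under $\mathbb{P}_n$, because the initial law is reversible. Writing $\hat M^g$ for the corresponding backward martingale and adding the forward and backward Dynkin formulas, the boundary terms cancel and we get
\begin{equation*}
\int_0^t 2n^2(-L_n)g(\eta(s))\,ds = M^g_t+\bigl(\hat M^g_{T}-\hat M^g_{T-t}\bigr).
\end{equation*}
Doob's $L^2$ inequality applied to each martingale bounds the $\mathbb{E}_n\sup_t|\cdot|^2$ of the left side by $C\,T\|g\|_{1,n}^2$.

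The time-dependent case is handled by approximation. First assume $f(s,\cdot)$ is piecewise constant in $s$, which suffices after a density argument in $L^2([0,T];L^2(\nu^{(n)}_\rho))$. The $-1,n$ norm in \eqref{dual-sobolev} is a supremum over local $g$, and local functions are dense in the domain of the Dirichlet form. Hence on each time piece we may solve, or approximately solve, the resolvent equation $(\varepsilon-n^2L_n)g_\varepsilon=f/2$. Alternatively, we first bound the expectation variationally and then remove the resolvent. Either way, we write
\begin{equation*}
f=2n^2(-L_n)g+(f-2n^2(-L_n)g).
\end{equation*}
The first piece is controlled by the martingale bound above. The remainder is controlled by the crude Cauchy--Schwarz estimate
\begin{equation*}
\mathbb{E}_n\sup_t\Bigl|\int_0^t h\Bigr|^2\le T\int_0^T\|h(s)\|^2_{L^2}\,ds,
\end{equation*}
which tends to zero as $\varepsilon\to0$. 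Optimizing over $g$ so that $\|g\|_{1,n}^2$ is comparable to $\|f\|_{-1,n}^2$ yields the bound $C_0\int_0^T\|f(s)\|_{-1,n}^2\,ds$. Tracking constants through Doob's inequality (factor $4$) and $(a+b+c)^2\le 3(a^2+b^2+c^2)$ gives a universal constant such as $27$.

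I expect the main obstacle to be the resolvent approximation step. Specifically, it requires checking that $\varepsilon\langle g_\varepsilon,g_\varepsilon\rangle\to0$ and $\|g_\varepsilon\|_{1,n}^2\to\|f/2\|_{-1,n}^2$ in the infinite-volume setting. This is standard spectral calculus for the nonnegative self-adjoint operator $-n^2L_n$ on $L^2(\nu^{(n)}_\rho)$. One must, however, justify that the supremum over local $g$ in \eqref{dual-sobolev} coincides with the supremum over the form domain. This holds because local functions form a core for the Dirichlet form, which follows since the rates are bounded on $\Omega_n$ ($c(\eta_x)\le c(\alpha_n)$). The constant $n$ enters only through this boundedness, so $C_0$ is independent of $n$.
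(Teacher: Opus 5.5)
Your proposal is essentially the paper's proof: a resolvent approximation of $f$ (your right-hand side $f/2$ versus the paper's $f$ is only a normalization), the forward--backward martingale identity combined with Doob's inequality and $(a_1+a_2+a_3)^2\le 3(a_1^2+a_2^2+a_3^2)$, and the crude Cauchy--Schwarz bound on the resolvent remainder, which gives the same constant $27$. The one real difference is how you handle time dependence: the paper puts $\partial_s f_\gamma$ into the forward and backward Dynkin martingales and lets it cancel, whereas you reduce to $f$ piecewise constant in $s$ and concatenate the martingales over the pieces, which avoids any differentiability in $s$ but means the approximants must be the averages of $f$ over the partition intervals, because $\|\cdot\|_{-1,n}$ is not controlled by the $L^2(\nu^{(n)}_\rho)$-norm and it is the convexity of $\|\cdot\|_{-1,n}^2$ that gives $\int_0^T\|f^{(k)}(s)\|_{-1,n}^2\,ds\le\int_0^T\|f(s)\|_{-1,n}^2\,ds$.
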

  \begin{proof}
    This proof is due to \cite[Lemma 4.3]{Chang2001} and \cite[Appendix A, Proposition 6.1]{Kipnis_Landm}. 
    By the definition of $\|\cdot\|_{-1,n}$, we have that for any $A>0$,
    \begin{equation}\label{estimate}
      \left<h,g\right>_{\nu^{(n)}_\rho}\leq \frac{A}{2}\|h\|_{1,n} + \frac{1}{2A}\|g\|_{-1,n}.
    \end{equation}
    Since $-n^2L_n$ is a nonnegative self-adjoint operator on $L^2(\nu^{(n)}_\rho)$, $\left(\gamma-n^2L_n\right)^{-1}$ is a bounded operator on $L^2(\nu^{(n)}_\rho)$ for any $\gamma>0$. Thus for any $\gamma>0$ and any $g\in L^2(\nu^{(n)}_\rho)$ with $\lVert g \rVert_{-1,n}<\infty$, there exists $h_\gamma\in L^2(\nu^{(n)}_\rho)$ such that 
    \begin{equation}\label{h_gamma}
      \gamma h_\gamma - n^2L_n h_\gamma = g.
    \end{equation}
    Taking $\left<\cdot,h_\gamma\right>$ on both sides and using the inequality \eqref{estimate} yield
    \begin{align*}
      \gamma \left<h_\gamma,h_\gamma \right>_{\nu^{(n)}_\rho} &\leq \|g\|_{-1,n}^2\\
      \|h_\gamma\|_{1,n}^2&\leq\|g\|_{-1,n}^2.
    \end{align*}
    Using the above formula, we have
    \begin{equation}
      \|g-\left(-n^2L_n\right)h_\gamma\|^2_{L^2(\nu^{(n)}_\rho)}\leq \gamma\|g\|_{-1,n}^2\to0\quad \text{as}\,\,\gamma\searrow0.
    \end{equation}
    Therefore $g$ can be approximated by elements of the range of the operator $L_n$ as long as $\|g\|_{-1,n}<\infty$. 
    
    Now let us move on to the proof of the Kipnis--Varadhan inequality. In order to prove \eqref{Kipnis--Varadhan}, we can assume that the right-hand side of \eqref{Kipnis--Varadhan} is finite. 
    Suppose that a function $f_\gamma(s,\cdot)$ satisfies
    \begin{equation}
      \gamma f_\gamma(s,\cdot) - n^2L_n f_\gamma(s,\cdot) = f(s,\cdot).
    \end{equation}
    Then we obtain
    \begin{equation}
      \begin{split}
        \mathbb{E}_n\!\left[\sup_{t\in[0,T]}\left|\int_0^t f(s,\eta(s))\,ds \right|^2 \right]&\leq 2\mathbb{E}_n\!\left[\sup_{t\in[0,T]}\left|\int_0^t f(s,\eta(s))+n^2L_nf_\gamma(s,\eta(s)) \,ds \right|^2 \right]\\
        &\quad\quad+2\mathbb{E}_n\!\left[\sup_{t\in[0,T]}\left|\int_0^t n^2L_nf_\gamma(s,\eta(s))\,ds \right|^2 \right]\\
        &\leq 2T\int_0^T \|f(s,\eta(s))+n^2L_nf_\gamma(s,\eta(s))\|_{L^2(\nu^{(n)}_\rho)}^2\,ds\\
        &\quad\quad +2\mathbb{E}_n\!\left[\sup_{t\in[0,T]}\left|\int_0^t n^2L_nf_\gamma(s,\eta(s))\,ds \right|^2 \right]\\
        &\leq 2T\gamma \int_0^T \|f(s,\cdot)\|^2_{-1,n}\,ds\\
        &\quad\quad +2\mathbb{E}_n\!\left[\sup_{t\in[0,T]}\left|\int_0^t n^2L_nf_\gamma(s,\eta(s))\,ds \right|^2 \right].\\
      \end{split}
    \end{equation}
    Since we can take $\gamma>0$ arbitrarily, the first term on the right-hand side will be arbitrarily small. Therefore we only have to take care of the second term.

    We can make use of the Dynkin martingale:
    \begin{equation}
      M_t(\eta)\coloneq f_\gamma(t,\eta(t))-f_\gamma(0,\eta(0))-\int_0^t \left(\partial_s+n^2L_n \right)f_\gamma(s,\eta(s))\,ds.
    \end{equation}
    This is a martingale with respect to the filtration generated by $\{\sigma(\eta(s))\,:\,s\in[0,t] \}$. On the other hand, we have so-called the backward martingale
    \begin{equation}
      M_t^-(\eta)\coloneq f_\gamma(T-t,\eta(T-t))-f_\gamma(T,\eta(T)) -\int_0^t \left(-\partial_s+n^2L_n^* \right)f_\gamma(T-s,\eta(T-s))\,ds
    \end{equation}
    In the above expression, $\partial_s$ denotes the derivative with respect to the first argument of the operand (not with respect to the variable $s$) and $L_n^*$ is the adjoint operator on $L^2(\nu^{(n)}_\rho)$ (in this case $L_n^*=L_n$). This is a $\{\sigma(\eta(s))\,:\,s\in[t,T]\}$-martingale.
    Changing variables in the integral in the backward martingale yields
    \begin{equation}
      M_T^--M_{T-t}^- = f_\gamma(0,\eta(0))-f_\gamma(t,\eta(t))-\int_0^t (-\partial_s+n^2L_n)f_\gamma(s,\eta(s))\,ds.
    \end{equation}
    Then
    \begin{equation}
      M_t+M_T^--M_{T-t}^-=2\int_0^t n^2L_n f_\gamma(s,\eta(s))\,ds.
    \end{equation}
    Therefore
    \begin{equation}\label{rewriting-LHS-KVineq}
      \mathbb{E}_n\!\left[\sup_{t\in[0,T]}\left|\int_0^t n^2L_nf_\gamma(s,\eta(s))\,ds \right|^2 \right]=\frac{1}{4}\mathbb{E}_n\!\left[\sup_{t\in[0,T]}\left(M_t+M_T^--M_{T-t}^- \right)^2 \right].
    \end{equation}
    By Doob's inequality and an elementary inequality $(a_1+a_2+a_3)^2\leq3(a_1^2+a_2^2+a_3^2)$, the right-hand side of \eqref{rewriting-LHS-KVineq} is bounded from above by
    \begin{equation}
      \mathbb{E}_n\!\left[3\left<M\right>_T + \frac{15}{4}\left<M^-\right>_T \right],
    \end{equation}
    where $\left<M \right>_\cdot$ denotes the quadratic variation. We can check that 
    \begin{equation}
    \begin{split}
      \mathbb{E}_n\left[\left<M \right>_T \right]&=\mathbb{E}_n\left[\left<M^- \right>_T \right] \\
      &= 2\int_0^T \|f_\gamma(s,\cdot)\|_{1,n}^2\,ds\\
      &\leq 2\int_0^T \|f(s,\cdot)\|_{-1,n}^2\,ds.\\
    \end{split}
    \end{equation}
    Thus we obtain
    \begin{equation}
      \mathbb{E}_n\!\left[\sup_{t\in[0,T]}\left|\int_0^t n^2L_nf_\gamma(s,\eta(s))\,ds \right|^2 \right]\leq \frac{27}{2}\int_0^T \|f(s,\cdot)\|_{-1,n}^2\,ds,
    \end{equation}
    which completes the proof.
  \end{proof}
  \subsection{The proof of Theorem \ref{BGP} }
  \subsubsection{Step 1: localization}
  Without loss of generality, we can assume that $\text{supp}\,f\subset \{0,1,\cdots,l_0-1\}$ where $l_0\in\mathbb{N}$. In addition, assume that $l\in\{3l_0+1,3l_0+2,\cdots\}$ and set
  \begin{equation}
    V^{(n)}(\eta)\coloneq f(\eta) - \Phi_f^{(n)}(\rho) - D^{(n)}_f\!(\rho)\left(\eta_0-\rho \right). 
  \end{equation}
  Furthermore, for each integer $j$, define
  \begin{align*}
    B_j&\coloneq lj+[0,l-1]\cap\mathbb{Z}\\
    B^0_j&\coloneq lj+[l_0, l-l_0 -1]\cap\mathbb{Z}\\
    B^1_j&\coloneq lj+[-l_0,l_0-1]\cap\mathbb{Z},\\
  \end{align*}
  thus $\{B^0_j,B^1_j\,:\, j\in\mathbb{Z}\}$ are disjoint subsets and $\mathbb{Z}=\bigcup \left(B^0_j\cup B^1_j \right)$.

  \begin{figure}[htbp]
  \centering
  \begin{tikzpicture}[>=stealth, semithick]

    \draw[->] (0, 0) -- (9.5, 0) node[below right] {$\mathbb{Z}$};

    \draw (3, -0.5) -- (3, 5);
    \node at (3, -0.8) {$\ell j$};

    \draw (7, -0.5) -- (7, 5);
    \node at (7, -0.8) {$(\ell+1) j$};

    \draw (2.0, -0.5) -- (2.0, 3.5);
    \draw (4.0, -0.5) -- (4.0, 3.5);
    \draw (6.0, -0.5) -- (6.0, 3.5);
    \draw (8.0, -0.5) -- (8.0, 3.5);

    \draw[<->] (3, 4.5) -- (7, 4.5) node[midway, above] {$B_j$};

    \draw[<->] (2.0, 2.8) -- (4.0, 2.8) node[midway, above=2pt] {$ B_j^1\quad\quad\,\, $};
    \draw[<->] (4.0, 2.8) -- (6.0, 2.8) node[midway, above=2pt] {$B_j^0$};
    \draw[<->] (6.0, 2.8) -- (8.0, 2.8) node[midway, above=2pt] {$B_{j+1}^1\quad\quad\,\,$};

    \draw[<->] (2.0, 1.2) -- (3, 1.2) node[midway, above=2pt] {$\ell_0$};
    \draw[<->] (3, 1.2) -- (4.0, 1.2) node[midway, above=2pt] {$\ell_0$};
    \draw[<->] (6.0, 1.2) -- (7, 1.2) node[midway, above=2pt] {$\ell_0$};
    \draw[<->] (7, 1.2) -- (8.0, 1.2) node[midway, above=2pt] {$\ell_0$};

\end{tikzpicture}
\caption{The integer lattice is divided into $\{B_j,B^0_j,B^1_j:j\in\mathbb{Z} \}$.}
\label{fig-Z-division}
\end{figure}

 \begin{align*}
   \mathbb{E}_n\!\left[\sup_{t\in[0,T]}\left|\int_0^t \frac{1}{\sqrt{n}}\sum_{x\in\mathbb{Z}}\tau_xV^{(n)}(\eta(s))\phi\!\left(s,\frac{x}{n} \right)\,ds\right|^2 \right]&\leq 2\left((\mathrm{I})+(\mathrm{II}) \right),\\
 \end{align*}
 where
 \begin{align*}
   (\mathrm{I}) &= \mathbb{E}_n\!\left[\sup_{t\in[0,T]}\left|\int_0^t \frac{1}{\sqrt{n}}\sum_{j\in\mathbb{Z}}\sum_{x\in B^0_j}\tau_xV^{(n)}(\eta(s))\phi\!\left(s,\frac{x}{n} \right)\,ds\right|^2 \right]\\
   (\mathrm{II}) &= \mathbb{E}_n\!\left[\sup_{t\in[0,T]}\left|\int_0^t \frac{1}{\sqrt{n}}\sum_{j\in\mathbb{Z}}\sum_{x\in B^1_j}\tau_xV^{(n)}(\eta(s))\phi\!\left(s,\frac{x}{n} \right)\,ds\right|^2 \right].
 \end{align*}
 It suffices to check that
 \begin{align}
   \limsup_{l\to\infty} \limsup_{n\to\infty} (\mathrm{I})&=0 \label{(I)} \\
   \limsup_{l\to\infty} \limsup_{n\to\infty} (\mathrm{II})&=0 \label{(II)}.
 \end{align}
 
  Firstly let us prove \eqref{(II)}. Indeed, by the Cauchy--Schwarz inequality, the cardinality of $B^1_j$ equals $2l_{0}$ and the fact that $\tau_xV^{(n)}$ and $\tau_yV^{(n)}$ are independent as long as $x\in B^1_i$ and $y\in B^1_j$ for distinct $i$ and $j$,
 \begin{align}
   (\mathrm{II})&\leq T\int_0^T \frac{1}{n}\sum_{j\in\mathbb{Z}}\,\sum_{x,y\in B^1_j} E_{\nu^{(n)}_\rho}\left[\tau_xV^{(n)}\cdot \tau_yV^{(n)} \right]\phi\!\left(s,\frac{x}{n} \right)\phi\!\left(s,\frac{y}{n} \right) \,ds\\
   &\leq C\int_0^T \frac{1}{n}\sum_{j\in\mathbb{Z}}\,\sum_{x\in B^1_j} \phi\!\left(s,\frac{x}{n} \right)^2\,ds\\
   &= C\int_0^T \frac{1}{n}\sum_{x\in B^1_0} \,\sum_{j\in\mathbb{Z}}\phi\!\left(s,\frac{lj+x}{n} \right)^2\,ds\\
   &\leq \frac{C}{n}\sum_{j\in\mathbb{Z}} \frac{1}{1+\left(\frac{lj}{n} \right)^2}\,ds\\
   &\leq C\left(\frac{1}{n} +\int_{-\infty}^{\infty} \frac{1}{1+l^2x^2}\,dx\right).
 \end{align}

 The last expression tends to 0 as $n\to\infty$ and then $l\to\infty$, hence we obtain \eqref{(II)}. Note that $E_{\nu^{(n)}_\rho}\left[\left( \tau_xV^{(n)}\right)^2 \right]$ is uniformly bounded in $n$.

 Moreover, let $y_j=lj$. Then

 \begin{equation}
   (\mathrm{I})\leq2\left((\mathrm{III})+(\mathrm{IV}) \right),
 \end{equation}
 where
 \begin{align*}
   (\mathrm{III})&= \mathbb{E}_n\!\left[\sup_{t\in[0,T]}\left|\int_0^t \frac{1}{\sqrt{n}}\sum_{j\in\mathbb{Z}}\sum_{x\in B^0_j}\tau_xV^{(n)}(\eta(s))\phi\!\left(s,\frac{y_j}{n} \right)\,ds\right|^2 \right]\\
   (\mathrm{IV})&= \mathbb{E}_n\!\left[\sup_{t\in[0,T]}\left|\int_0^t \frac{1}{\sqrt{n}}\sum_{j\in\mathbb{Z}}\sum_{x\in B^0_j}\tau_xV^{(n)}(\eta(s))\left|\phi\!\left(s,\frac{x}{n}\right)- \phi\!\left(s,\frac{y_j}{n}\right) \right|\,ds\right|^2 \right].\\
 \end{align*}
 Applying the Cauchy--Schwarz inequality to $(\mathrm{IV})$,
 \begin{equation}
 \begin{split}
   (\mathrm{IV})&\leq C\int_0^T \frac{1}{n}\sum_{j\in\mathbb{Z}}\,\sum_{x,y\in B^0_j} E_{\nu^{(n)}_\rho}\left[\tau_xV^{(n)}\cdot \tau_yV^{(n)} \right]\\ 
   &\quad\quad\quad\cdot\left|\phi\!\left(s,\frac{x}{n}\right)- \phi\!\left(s,\frac{y_j}{n}\right) \right| \left|\phi\!\left(s,\frac{y}{n}\right)- \phi\!\left(s,\frac{y_j}{n}\right) \right|  \,ds\\
   &\leq C \frac{l}{n} \int_0^T \frac{1}{n} \sum_{j\in\mathbb{Z}}\,\sum_{x\in B^0_j} \left(\left|\phi\!\left(s,\frac{x}{n}\right)\right|+ \left|\phi\!\left(s,\frac{y_j}{n}\right)\right|\right)\,ds\\
   &\leq C\frac{l^2}{n}.
 \end{split}
 \end{equation}
 In the second inequality, we used the fact that 
 \begin{align}
   \left|\phi\!\left(s,\frac{y}{n}\right)- \phi\!\left(s,\frac{y_j}{n}\right) \right| &\leq C\frac{|y-y_j|}{n} \leq C\frac{l}{n},\\
   \left|\phi\!\left(s,\frac{x}{n}\right)- \phi\!\left(s,\frac{y_j}{n}\right) \right| &\leq \left\lvert \phi\!\left(s,\frac{x}{n}\right)+ \phi\!\left(s,\frac{y_j}{n}\right) \right\rvert.
 \end{align}
  and that $E_{\nu^{(n)}_\rho}\left[\tau_xV^{(n)}\cdot \tau_yV^{(n)} \right]$ is uniformly bounded in $n$, $x$, and $y$. 
 In the third inequality, we used the fact that the cardinality of $B^0_j$ is less than $l$.
 Therefore, we have $\limsup_{l\to\infty}\limsup_{n\to\infty} (\mathrm{(IV)})=0$.

 \subsubsection{Step 2: approximation by elements of the range of $L^{(\infty)} _{B_j}$}
 Our remaining task is to prove the following:
 \begin{equation}{\label{III}}
   \limsup_{l\to\infty}\limsup_{n\to\infty} (\mathrm{III})=0.
 \end{equation}
 Let $L^{(n)} _{B_j}$ be a linear operator
 \begin{align*}
   L^{(n)}_{B_j}f(\eta)&\coloneq \sum_{x,y\in B_j\,;\,|x-y|=1}r_n(\eta_x,\eta_y)\left[f(\eta^{x,y})-f(\eta) \right]\\
   r_n(\eta_x,\eta_y)&\coloneq c(\eta_x)\left(1-\frac{c(\eta_y)}{c(\alpha_n)} \right),
 \end{align*}
 and 
 
 \begin{align*}
   L^{(\infty)}_{B_j}f(\eta)&\coloneq \sum_{x,y\in B_j\,;\,|x-y|=1}r_{\infty}(\eta_x,\eta_y)\left[f(\eta^{x,y})-f(\eta) \right]\\
   r_\infty(\eta_x,\eta_y)&\coloneq c(\eta_x).
 \end{align*}
 
 In addition, let $f_0$ be a bounded local function with $\text{supp}\,\,f_0\subset B_0$ and $f_j\coloneq \tau_{lj}f_0 $ (hence $\text{supp}\,\,f_j\subset B_j$). Then we have that
 \begin{equation}
   (\mathrm{III})\leq 2\left((\mathrm{V})+(\mathrm{VI}) \right),
 \end{equation}
 where
 \begin{align*}
   (\mathrm{V})&= \mathbb{E}_n\!\left[\sup_{t\in[0,T]}\left|\int_0^t \frac{1}{\sqrt{n}}\sum_{j\in\mathbb{Z}}\phi\!\left(s,\frac{y_j}{n} \right) \left(-L^{(n)}_{B_j}f_j(\eta(s))+\sum_{x\in B^0_j} \tau_xV^{(n)}(\eta(s)) \right)\,ds\right|^2 \right]\\
   (\mathrm{VI}) &= \mathbb{E}_n\!\left[\sup_{t\in[0,T]}\left|\int_0^t \frac{1}{\sqrt{n}}\sum_{j\in\mathbb{Z}}\phi\!\left(s,\frac{y_j}{n} \right) L^{(n)}_{B_j}f_j(\eta(s))\,ds\right|^2 \right].\\
 \end{align*}
 In order to complete the proof of Theorem \ref{BGP}, it suffices to prove that
 \begin{align}
   \limsup_{l\to\infty} \inf_{f_0\in L^2(\nu^{(\infty)}), local} \limsup_{n\to\infty} (\mathrm{V})&=0 \label{V}\\
   \limsup_{l\to\infty} \limsup_{n\to\infty} (\mathrm{VI})=0\label{VI}.
 \end{align}
 It is easier to show that $(\mathrm{VI})$ tends to 0 as $n\to\infty$ and then $l\to\infty$.
 Thanks to the Kipnis--Varadhan inequality (Proposition \ref{KVI}),
 \begin{equation}
   (\mathrm{VI})\leq C_0\int_0^T \left\lVert\frac{1}{\sqrt{n}}\sum_{j\in\mathbb{Z}}\phi\!\left(s,\frac{y_j}{n} \right) L^{(n)}_{B_j}f_j\right\rVert_{-1,n}^2\,ds.
 \end{equation}
 By definition of the $\|\cdot\|_{-1,n}$ norm and the fact that
 \begin{equation}
   \left<g,\left(-n^2L_n \right)g \right>_{\nu^{(n)}_\rho}\geq \sum_{j\in\mathbb{Z}} \left<g,\left(-n^2L^{(n)}_{B_j} \right)g \right>_{\nu^{(n)}_\rho},
 \end{equation}
 we obtain
 \begin{equation}
 \begin{split}
   &(\mathrm{VI})\\
   &\leq C\sum_{j\in\mathbb{Z}}\int_0^T\sup_{g\,:\,local} \left\{2\left<\frac{1}{\sqrt{n}}\phi\!\left(s,\frac{y_j}{n} \right) L^{(n)}_{B_j}f_j,g \right>_{\nu^{(n)}_\rho} - \left<g,\left(-n^2L^{(n)}_{B_j} \right)g \right>_{\nu^{(n)}_\rho} \right\}\,ds\\
   &\leq C\sum_{j\in\mathbb{Z}}\int_0^T\frac{1}{n^3}\phi\!\left(s,\frac{y_j}{n} \right)^2 \sup_{g\,:\,local} \left\{2\left< L^{(n)}_{B_j}f_j,g \right>_{\nu^{(n)}_\rho} - \left<g,\left(-L^{(n)}_{B_j} \right)g \right>_{\nu^{(n)}_\rho} \right\}\,ds\\
   &\leq C\sum_{j\in\mathbb{Z}}\int_0^T \frac{1}{n^3}\phi\!\left(s,\frac{y_j}{n} \right)^2 \left<f_0, \left(-L^{(n)}_{B_0} \right)f_0 \right>_{\nu^{(n)}_\rho}\,ds.
 \end{split}
 \end{equation}
  The last inequality holds since the supremum in the integrand is attained by $g=-f_j$ and $\nu^{(n)}_\rho$ is translation invariant. Therefore,
  \begin{equation}
    (\mathrm{VI})\leq \frac{C}{n^2} \left<f_0, \left(-L^{(n)}_{B_0} \right)f_0 \right>_{\nu^{(n)}_\rho}.
  \end{equation}
  Thus we obtain the statement \eqref{VI}. Note that we used Lemma \ref{ev_cvgc} to establish the boundedness of $\left<f_0,\left(-L^{(n)}_{B_0} \right)f_0 \right>_{\nu^{(n)}_\rho}$.
  
  Finally, let us show \eqref{V}. Using the Cauchy--Schwarz inequality and the fact that $\sum_{x\in B^0_i} \tau_x V-L^{(n)}_{B_{i}}f_i$ and $\sum_{y\in B^0_j} \tau_y V-L^{(n)}_{B_{j}}f_j$ are independent as long as $i\neq j$,
  \begin{equation}
    \begin{split}
      &(\mathrm{V})\\
      &\leq C\mathbb{E}_n\!\left[\int_0^T \frac{1}{n}\left\{\sum_{j\in\mathbb{Z}}\phi\!\left(s,\frac{y_j}{n} \right) \left(-L^{(n)}_{B_j}f_j(\eta(s))+\sum_{x\in B^0_j} \tau_xV^{(n)} (\eta(s)) \right) \right\}^2 \,ds\right]\\
      &\leq \frac{C}{n}\int_0^T\sum_{j\in\mathbb{Z}}\phi\!\left(s,\frac{y_j}{n} \right)^2 \mathbb{E}_n\!\left[\left(-L^{(n)}_{B_j}f_j(\eta(s))+\sum_{x\in B^0_j} \tau_xV^{(n)} (\eta(s)) \right)^2 \right]\,ds\\
      &\leq \frac{C}{l} E_{\nu^{(n)}_\rho}\!\left[\left(-L^{(n)}_{B_0}f_0+\sum_{x\in B^0_0} \tau_xV^{(n)} \right)^2 \right]\\
      &= \frac{C}{l} E_{\nu^{(\infty)}_\rho}\!\left[\left(-L^{(n)}_{B_0}f_0+\sum_{x\in B^0_0} \tau_xV^{(n)} \right)^2\frac{d\nu^{(n)}_\rho}{d\nu^{(\infty)}_\rho} \right].\\
    \end{split}
  \end{equation}
  Letting $n\to\infty$ and using the lemmas in Section \ref{section-phi-D},\, $0\leq r_n \leq r_\infty\in L^2(\nu^{(\infty)}_\rho)$ and the dominated convergence theorem,  we get
  \begin{equation}\label{inf_f_0}
    \inf_{f_0} \limsup_{n\to\infty} (\mathrm{V})\leq \frac{C}{l} \inf_{f_0}E_{\nu^{(\infty)}_\rho}\!\left[\left(-L^{(\infty)}_{B_0}f_0+\sum_{x\in B^0_0} \tau_xV^{(\infty)} \right)^2 \right].
  \end{equation}
  On the right-hand side we can see the orthogonal projection onto the orthogonal complement of the range of $L^{(\infty)}_{B_0}$ which is regarded as an operator on $L^2(\mathbb{N}^l, \nu^{(\infty)}_\rho )$. Hereafter we are working on $L^2(\mathbb{N}^{l}, \nu^{(\infty)}_\rho )$ in this section.

  Let $RL^{(\infty)}_{B_0}$ be the range of $L^{(\infty)}_{B_0}$, $(RL^{(\infty)}_{B_0})^{\perp}$ be its orthogonal complement and $\eta_0^{l}\coloneq\frac{1}{l}\sum_{x=0}^{l-1}\eta_x$. Then we can prove that
  \begin{equation}\label{range_perp}
    (RL^{(\infty)}_{B_0})^{\perp} = \left\{f\in L^2(\mathbb{N}^l, \nu^{(\infty)}_\rho )\,:\, \text{values of $f$ depend only on } \eta_0^l \right\}.
  \end{equation}
  Indeed, we can easily check that the left-hand side includes the right-hand side. For the converse inclusion, take $f\in(RL^{(\infty)}_{B_0})^{\perp}$. Then 

  \begin{equation}
    \left<f, -L^{(\infty)}_{B_0}f \right>_{\nu^{(\infty)}_\rho} = E_{\nu^{(\infty)}_\rho}\!\left[\sum_{\substack{x,y\in B_0\\ |x-y|=1}} r_\infty(\eta_x,\eta_y)\left\{f(\eta^{x,y})-f(\eta) \right\}^2 \right]=0.
  \end{equation}
  Therefore we obtain \eqref{range_perp}.

  Now we are ready to prove \eqref{inf_f_0}. Let $E_{\nu^{(\infty)}_\rho}[\,\cdot\,|\eta_0^l]$ denote the conditional expectation with respect to the $\sigma$-algebra generated by $\eta_0^l$. Then \eqref{inf_f_0} becomes
  \begin{equation}\label{condi_ex}
    \inf_{f_0} \limsup_{n\to\infty} (\mathrm{V})\leq \frac{C}{l} E_{\nu^{(\infty)}_\rho}\!\left[E_{\nu^{(\infty)}_\rho}\!\left[\sum_{x\in B^0_0} \tau_xV^{(\infty)} \,\middle|\,\eta_0^l\,\right]^2 \right].
  \end{equation}
  In the above formula, 
  \begin{equation}
    E_{\nu^{(\infty)}_\rho}\!\left[\sum_{x\in B^0_0} \tau_xV^{(\infty)} \,\middle|\,\eta_0^l\right] = \sum_{x\in B^0_0} \left\{E_{\nu^{(\infty)}_\rho}\!\left[\tau_x f\,\middle|\,\eta_0^l \right] - \Phi^{(\infty)}_f(\rho) - D^{(\infty)}_f(\rho)(\eta_0^l-\rho) \right\}.
  \end{equation}
  Here we use $E_{\nu^{(\infty)}_\rho}\!\left[\eta_x \,\middle|\,\eta_0^l\,\right]=\eta_0^l$. 

  \subsubsection{Step 3: cut off at large density}
  
  It is necessary to obtain a good bound of the right-hand side of \eqref{condi_ex}.

  Let $L$ be a positive number which is supposed to diverge after $l\to\infty$.
  \begin{equation}
    \frac{1}{l} E_{\nu^{(\infty)}_\rho}\!\left[E_{\nu^{(\infty)}_\rho}\!\left[\sum_{x\in B^0_0} \tau_xV^{(\infty)} \,\middle|\,\eta_0^l\,\right]^2 \right]\leq3\left[(\mathrm{VII})+(\mathrm{VIII})+(\mathrm{IX}) \right],
  \end{equation}
  where
  \begin{align*}
    (\mathrm{VII})&\coloneq\frac{1}{l}  E_{\nu^{(\infty)}_\rho}\!\left[\left\{\sum_{x\in B_0^0}\left(E_{\nu^{(\infty)}_\rho}\!\left[\tau_x f\,\middle|\,\eta_0^l \right] - \Phi^{(\infty)}_{\tau_xf}(\eta_0^l) \right) \right\}^2\mathbbm{1}_{\{\eta_0^l\leq L\}}  \right]\\
    (\mathrm{VIII})&\coloneq\frac{1}{l}  E_{\nu^{(\infty)}_\rho}\!\left[\left\{\sum_{x\in B_0^0}\left(\Phi^{(\infty)}_{\tau_xf}(\eta_0^l)- \Phi^{(\infty)}_f(\rho) - D^{(\infty)}_f(\rho)(\eta_0^l-\rho)\right) \right\}^2\mathbbm{1}_{\{\eta_0^l\leq L\}}  \right]  \\
    (\mathrm{IX})&\coloneq\frac{1}{l}  E_{\nu^{(\infty)}_\rho}\!\left[E_{\nu^{(\infty)}_\rho}\!\left[\sum_{x\in B^0_0} \tau_xV^{(\infty)} \,\middle|\,\eta_0^l\,\right]^2\mathbbm{1}_{\{\eta_0^l\geq L\}} \right].\\
  \end{align*}
  It remains to estimate each term.
  Firstly, thanks to the equivalence of ensembles \cite[Appendix 2, Corollary 1.7]{Kipnis_Landm},
  \begin{equation}
  \begin{split}
    (\mathrm{VII})&\leq C\sum_{x\in B^0_0} E_{\nu^{(\infty)}_\rho}\!\left[\left\{E_{\nu^{(\infty)}_\rho}\!\left[\tau_x f\,\middle|\,\eta_0^l \right] - \Phi^{(\infty)}_{\tau_xf}(\eta_0^l) \right\}^2\mathbbm{1}_{\{\eta_0^l\leq L\}} \right]\\
    &\leq C(L)\sum_{x\in B^0_0} \frac{1}{l^2}\\
    &\leq \frac{C(L)}{l}.
  \end{split}
  \end{equation}
  which tends to 0 as $l\to\infty$. Here $C(L)$ denotes generic constants which may depend on $L$.

  Furthermore, by the Taylor expansion of $\Phi^{(\infty)}_{f}$ around $\rho$, 
  \begin{equation}
    \begin{split}
      (\mathrm{VIII})&\leq C(L)\sum_{x\in B^0_0}E_{\nu^{(\infty)}_\rho}\!\left[(\eta_0^l-\rho)^4 \right]\\
      &\leq \frac{C(L)}{l}
    \end{split}
  \end{equation}
  which also tends to 0 as $l\to\infty$. Here we used the fact that $\nu^{(\infty)}_\rho$ is a product measure and that $E_{\nu^{(\infty)}_\rho}\!\left[(\eta_0^l-\rho)^4 \right]=O(l^{-2})$.

  For $(\mathrm{IX})$, by Jensen's inequality and translation invariance of $\nu^{(\infty)}_\rho$,
  \begin{equation}
    \begin{split}
      (\mathrm{IX})\leq CE_{\nu^{(\infty)}_\rho}\!\left[V^{(\infty)}(\eta)^2\mathbbm{1}_{\{\eta_0^l\geq L\}} \right].
    \end{split}
  \end{equation}
  Note that the right-hand side does not depend on $l$ and tends to 0 as $L\to\infty$. This completes the proof of Theorem \ref{BGP}.

  \section{Derivation of the SHE}
  The rest of this manuscript is devoted to the proof of Theorem \ref{Main-theorem}. We use the Dynkin martingale.
  \begin{align*}
    \mathcal{M}_t^n(\phi)\coloneq \mathcal{X}_t^n(\phi)-\mathcal{X}_0^n(\phi) -\int_0^t (\partial_s+n^2L_n)\mathcal{X}_s^n(\phi)\,ds.
  \end{align*}
  It is well-known that its quadratic variation equals
  \begin{equation}
  \begin{split}
    &\left<\mathcal{M}^n(\phi) \right>_t\\
    &= \int_0^t\left(n^2L_n\left(\mathcal{X}_s^n(\phi)^2 \right)-2\mathcal{X}_s^n(\phi)\,n^2L_n\mathcal{X}_s^n(\phi) \right)\,ds\\
    &=\int_0^t \frac{1}{n}\sum_{x\in\mathbb{Z}}\left\{r_n(\eta_x(s),\eta_{x+1}(s))+r_n(\eta_x(s),\eta_{x-1}(s))\right\} \left(\nabla^n\phi(s,\frac{x}{n}) \right)^2\,ds,
  \end{split}
  \end{equation}
  where $\nabla^n$ denotes $\nabla^n\phi(s,\frac{x}{n})\coloneq n\left\{\phi(s,\frac{x+1}{n})-\phi(s,\frac{x}{n})\right\}$. Note that, for each $t\in[0,T]$, the right-hand side of the above formula converges almost surely to $2\Phi_c^{(\infty)}(\rho)\int_0^t\lVert\nabla\phi(s,\cdot)\rVert_{L^2(\mathbb{R})}^2\,ds$ as $n\to\infty$.

  In addition, a simple computation shows that
  \begin{equation}
    n^2L_n\eta_x=n^2\left\{c(\eta_{x+1})+c(\eta_{x-1})-2c(\eta_x) \right\}.
  \end{equation}
  Plugging this into the definition of the Dynkin martingale and using summation by parts,
  \begin{equation}
    n^2L_n\mathcal{X}_s^n(\phi)=\frac{1}{\sqrt{n}}\sum_{x\in\mathbb{Z}} c(\eta_x(s))\Delta_n\phi\!\left(s,\frac{x}{n}\right),
  \end{equation}
  here $\Delta_n$ denotes the discrete Laplacian $\Delta_n\phi\!\left(s,\frac{x}{n}\right)=n^2\left(\phi\!\left(s,\frac{x+1}{n}\right)+\phi\!\left(s,\frac{x-1}{n}\right)-2\phi\!\left(s,\frac{x}{n}\right) \right)$.
  Therefore we have
  \begin{align*}
    \mathcal{M}_t^n(\phi)=\mathcal{X}_t^n(\phi)-\mathcal{X}_0^n(\phi)-\int_0^t\mathcal{X}_s^n(\partial_s\phi)+ \frac{1}{\sqrt{n}}\sum_{x\in\mathbb{Z}}\left(c(\eta_x(s))-\Phi_c^{(n)}(\rho) \right)\Delta_n\phi\!\left(s,\frac{x}{n}\right)\,ds.
  \end{align*}
  Above, we used the fact that the telescoping sum vanishes, so that we can freely recenter the variable $c(\eta)$.

  At a heuristic level, the term $c(\eta_x(s))-\Phi_c^{(n)}(\rho)$ can be approximated by $D^{(\infty)}_c(\rho)\left(\eta^l_x(s)-\rho \right)$ thanks to the Boltzmann--Gibbs principle. Furthermore, as we shall see below, the process $\mathcal{X}^n$ has a limit in the Skorokhod topology, along some subsequence which we still denote by $n$ by abuse of notation. Consequently, setting the limiting process $\mathcal{X}_\cdot\coloneq\lim_{n\to\infty}\mathcal{X}^{n}_\cdot$, we can see that
  \begin{align*}
    \lim_{n\to\infty}\mathcal{M}_t^n(\phi)&= \mathcal{X}_t(\phi)-\mathcal{X}_0(\phi)-\int_0^t\mathcal{X}_s(\partial_s\phi+D^{(\infty)}_c(\rho)\Delta\phi)\,ds\\
    \lim_{n\to\infty}\left<\mathcal{M}^n(\phi) \right>_t&=2\Phi_c^{(\infty)}(\rho)\int_0^t\lVert\nabla\phi(s,\cdot)\rVert_{L^2(\mathbb{R})}^2\,ds
  \end{align*}
  In other words, if we can prove the above formulae and that $\lim_{n\to\infty}\mathcal{M}_t^n(\phi)$ is a martingale with quadratic variation $\lim_{n\to\infty}\left<\mathcal{M}^n(\phi) \right>_t$, then $\mathcal{X}_\cdot$ is a solution to the SHE. This is the outline of the proof of the main theorem.
  
  \subsection{Tightness}
  In this section we prove the tightness of the distribution of $\mathcal{X}^n_\cdot $ on $D([0,T]:\mathcal{S}'(\mathbb{R})) $. By Mitoma's criterion (see \cite{Mitoma1983TightnessOP}), it suffices to show the tightness of $\left\{\mathcal{X}^n_\cdot(\phi) \right\}_n$ for any $\phi\in\mathcal{S}(\mathbb{R})$.

  By the Dynkin martingale decomposition, we have

  \begin{equation}
    \mathcal{X}^n_t(\phi)=\mathcal{X}^n_0(\phi)+\mathcal{M}^n_t(\phi)+\mathcal{S}^n_t(\phi)+\mathcal{R}^n_t(\phi),
  \end{equation}
  where
  \begin{align*}
    \mathcal{S}^n_t(\phi)&\coloneq \int_{0}^t \frac{1}{\sqrt{n}}\sum_{x\in\mathbb{Z}}\left(c\!\left(\eta_x(s)\right)-\Phi_c^{(n)}(\rho) \right)\left(\partial_s +\Delta\right)\phi\!\left(s,\frac{x}{n}\right)\,ds\\
    \mathcal{R}^n_t(\phi)&\coloneq \int_0^t \frac{1}{\sqrt{n}}\sum_{x\in\mathbb{Z}}\left(c\!\left(\eta_x(s)\right)-\Phi_c^{(n)}(\rho) \right)\left\{\Delta_n\phi\!\left(s,\frac{x}{n}\right)-\Delta\phi\!\left(s,\frac{x}{n}\right) \right\}\,ds. \\
  \end{align*}
  By the above decomposition, it suffices to show that $\left\{\mathcal{X}^n_0(\phi) \right\}_n$, $\left\{\mathcal{M}^n_\cdot(\phi) \right\}_n$ and $\left\{\mathcal{S}^n_\cdot(\phi) \right\}_n$ are tight and that
  \begin{equation}\label{tightness-R}
    \lim_{n\to\infty}\mathbb{E}_n\!\left[\sup_{t\in[0,T]}\lvert \mathcal{R}^n_t(\phi)\rvert^2 \right]=0 .
  \end{equation} 

  For $\left\{\mathcal{X}^n_0(\phi) \right\}_n$, it can be checked that their characteristic functions converge to that of a Gaussian random variable.
  
  \subsubsection{The remainder part}
  Let us prove \eqref{tightness-R}. Since we have the Taylor expansion,
  \begin{equation}
    \begin{split}
      \left\{\Delta_n\phi\!\left(s,\frac{x}{n}\right)-\Delta\phi\!\left(s,\frac{x}{n}\right) \right\}\leq C(\phi)\frac{1}{n^2}\left\{\frac{1}{1+\left(\frac{x}{n} \right)^2}+\frac{1}{1+\left(\frac{x+1}{n} \right)^2} \right\}.
    \end{split}
  \end{equation}
  Moreover, since $\phi\in C([0,T]: \mathcal{S}(\mathbb{R}))$ and we have the Cauchy--Schwarz inequality,

  \begin{equation}
    \begin{split}
      &\mathbb{E}_n\!\left[\sup_{t\in[0,T]}\lvert \mathcal{R}^n_t(\phi)\rvert^2 \right] \\
      &\leq C\int_0^T \frac{1}{n}\mathbb{E}_n\!\left[\sum_{x\in\mathbb{Z}}\left(c\!\left(\eta_x(s)\right)-\Phi_c^{(n)}(\rho) \right)\left\{\Delta_n\phi\!\left(s,\frac{x}{n}\right)-\Delta\phi\!\left(s,\frac{x}{n}\right) \right\}\right]^2\,ds\\
      &\leq C\int_0^T \frac{1}{n}\sum_{x\in\mathbb{Z}}\sum_{y\in\mathbb{Z}}E_{\nu^{(n)}_\rho}\!\left[\left(c(\eta_x)-\Phi^{(n)}_c(\rho) \right)\left(c(\eta_y)-\Phi^{(n)}_c(\rho) \right) \right]\\
      &\quad\quad\quad \frac{1}{n^2}\left\{\frac{1}{1+\left(\frac{x}{n} \right)^2}+\frac{1}{1+\left(\frac{x+1}{n} \right)^2} \right\}\frac{1}{n^2}\left\{\frac{1}{1+\left(\frac{y}{n} \right)^2}+\frac{1}{1+\left(\frac{y+1}{n} \right)^2} \right\}\,ds\\
      &\leq C\frac{1}{n^5}\sum_{x\in\mathbb{Z}}\text{Var}_{\nu^{(n)}_\rho}\!\left[c(\eta_x) \right] \left\{\frac{1}{1+\left(\frac{x}{n} \right)^2}+\frac{1}{1+\left(\frac{x+1}{n} \right)^2} \right\}^2\\
      &\leq C\frac{1}{n^4}\to 0 \quad \text{as $n\to\infty$}.
    \end{split}
  \end{equation}
  Note that $\text{Var}_{\nu^{(n)}_\rho}\!\left[c(\eta_x) \right]$ is bounded in $n$.

  \subsubsection{The martingale part}
  In order to prove the tightness of $\left\{\mathcal{M}^n_\cdot\right\}$, we use the following criterion given in \cite[Theorem VIII.3.12]{jacod2013limit}.
  \begin{proposition}
    Let $\left\{M^n_\cdot \right\}_{n\in\mathbb{N}} $ be a sequence of martingales
    belonging to the space $D\left([0,T]:\mathbb{R} \right)$ and denote by $\left<M^n \right>_\cdot$ the quadratic variation of $M^n_\cdot$.
    Let $q\colon[0,T]\to\mathbb{R}_{+}$ be a deterministic continuous function.
    Assume that
    \begin{enumerate}
      \item For any $n\in\mathbb{N} $, the quadratic variation process $\left\{\left<M^n\right>_t:t\in[0,T] \right\}$ has continuous trajectories almost surely.
      \item The following limit holds:
        \begin{equation}
          \lim_{n\to\infty}\mathbb{E}\!\left[\sup_{0\leq t \leq T}\lvert M^n_t- M^n_{t-}\rvert \right]=0.
        \end{equation}
      \item For any $t\in[0,T]$, the sequence of random variables $\left\{\left<M^n \right>_t \right\}_n$ converges to $q(t)$ in probability.
    \end{enumerate}
    Then, the sequence $\left\{M^n_\cdot \right\}_{n\in\mathbb{N}} $ converges in distribution on $D\left([0,T]:\mathbb{R} \right)$
    to a mean-zero Gaussian process $\left\{M_t:t\in[0,T] \right\}$, which is a martingale with continuous trajectories and whose quadratic variation is given by $q$.
  \end{proposition}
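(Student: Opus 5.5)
This is the martingale functional central limit theorem of \cite[Theorem VIII.3.12]{jacod2013limit}. For our purposes one could simply cite it, but a self-contained argument would go as follows. Throughout, $\left<M^n\right>$ denotes the predictable quadratic variation, which is continuous by assumption 1. There are three steps: (a) upgrade the pointwise convergence in assumption 3 to uniform convergence; (b) deduce tightness of $\{M^n\}$ with continuous limit points; (c) identify every limit point via Lévy's characterization.

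\textbf{Step (a).} Each $\left<M^n\right>$ is nondecreasing and $q$ is continuous and nondecreasing, being a limit of nondecreasing functions. A Dini/Pólya-type argument then gives
\begin{equation*}
\sup_{t\le T}\bigl|\left<M^n\right>_t-q(t)\bigr|\to 0\quad\text{in probability.}
\end{equation*}
To see this, fix a finite grid $0=t_0<\dots<t_k=T$ on which the oscillation of $q$ is below $\varepsilon$. Use convergence at the grid points and sandwich $\left<M^n\right>_t$ between its values at neighbouring grid points.

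\textbf{Step (b).} By Step (a), the sequence $\{\left<M^n\right>\}$ is C-tight. Rebolledo's criterion (equivalently, Aldous' criterion combined with Lenglart's inequality) then shows that $\{M^n\}$ is tight in $D([0,T]:\mathbb{R})$. For a stopping time $\tau\le T$ and $\delta>0$, Lenglart's inequality gives
\begin{equation*}
P\bigl(|M^n_{\tau+\delta}-M^n_\tau|>\varepsilon\bigr)\le \frac{\eta}{\varepsilon^2}+P\bigl(\left<M^n\right>_{\tau+\delta}-\left<M^n\right>_\tau>\eta\bigr),
\end{equation*}
and the last probability is uniformly small for small $\delta$ by Step (a). Assumption 2 forces the maximal jump to vanish, so all limit points have continuous paths.

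\textbf{Step (c).} Let $M$ be a limit point along a subsequence. I will show that $M$ and $M^2-q$ are martingales. The main obstacle is uniform integrability: assumption 3 gives only convergence in probability, and assumption 2 controls jumps only in $L^1$.

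I would first localize with $\tau_n\coloneq\inf\{t:\left<M^n\right>_t>q(T)+1\}\wedge T$. By Step (a), $P(\tau_n<T)\to0$. For the stopped martingale $N^n=M^n_{\cdot\wedge\tau_n}$, the bound $\left<N^n\right>_T\le q(T)+1$ gives $\mathbb{E}[(N^n_t)^2]\le q(T)+1$, so $N^n_t$ is uniformly integrable. Martingality of $M$ then follows by passing to the limit in $\mathbb{E}[(N^n_t-N^n_s)\Psi(N^n_{s_1},\dots,N^n_{s_j})]=0$ for bounded continuous $\Psi$.

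For $M^2-q$, uniform integrability of $(N^n_t)^2$ is not automatic. I would instead use the complex exponential martingales
\begin{equation*}
\exp\bigl(iuN^n_t+\tfrac{u^2}{2}\left<N^n\right>_t\bigr)\cdot(\text{correction}),
\end{equation*}
where the correction accounts for jumps. By a second-order Taylor expansion, the correction is controlled by $\mathbb{E}[\sup_t|\Delta M^n_t|]\cdot\left<N^n\right>_T$, which tends to $0$ by assumption 2. These processes are bounded, so passing to the limit is immediate. The limit shows that
\begin{equation*}
\mathbb{E}\bigl[e^{iu(M_t-M_s)}\,\big|\,\mathcal{F}_s\bigr]=e^{-u^2(q(t)-q(s))/2}.
\end{equation*}
This is exactly the statement that $M$ has independent centered Gaussian increments with variance $q(t)-q(s)$, which is Lévy's characterization. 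In particular, $M$ is a continuous martingale with quadratic variation $q$. The law of $M$ is thereby uniquely determined, so the whole sequence converges.
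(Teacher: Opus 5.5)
Citing \cite{jacod2013limit} is all the paper does. Your Steps (a) and (b), and the martingale property of the limit in Step (c), are sound; the gap is the jump correction in Step (c). The second-order Taylor remainder is a compensator term of size up to $\frac{|u|^3}{6}\int_0^T\!\int|x|^3\,\nu^n(ds,dx)$, where $\nu^n$ is the compensator of the jump measure of $N^n$. That is, it weights \emph{possible} jump sizes by their intensity. Pathwise you only have $\sum_{s\le T}|\Delta N^n_s|^3\le\sup_s|\Delta N^n_s|\,[N^n]_T$, which involves the optional bracket rather than $\langle N^n\rangle$. An $L^1$ bound on the realized largest jump cannot control rare big jumps that carry a fixed share of $\langle N^n\rangle$.

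This is not merely a missing estimate. Take $M^n_t\coloneq n\,\Pi(t/n^2)-t/n$ with $\Pi$ a standard Poisson process. Then $\langle M^n\rangle_t=t$ and $\mathbb{E}[\sup_{t\le T}|\Delta M^n_t|]=n(1-e^{-T/n^2})\le T/n$, so assumptions 1--3 hold with $q(t)=t$. Yet $\sup_{t\le T}|M^n_t|=T/n$ on an event of probability $e^{-T/n^2}\to1$, so $M^n\to0$. In this example $\int_0^t\!\int|x|^3\nu^n(ds,dx)=nt$, while $\mathbb{E}[\sup_t|\Delta M^n_t|]\,\langle M^n\rangle_T\le T^2/n$. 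So with $\langle M^n\rangle$ the predictable bracket, the proposition is false as stated and no argument can close Step (c). This is your reading, and the only one compatible with assumption 1 for martingales with jumps, which is how the paper uses it.

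What is needed is a Lindeberg-type condition such as $\int_0^T\!\int x^2\mathbbm{1}_{\{|x|>\varepsilon\}}\nu^n(ds,dx)\to0$ in probability, or $\mathbb{E}[\sup_t|\Delta M^n_t|^2]\to0$ (Ethier--Kurtz, Theorem 7.1.4(a)). The $L^1$ jump condition suffices only when assumption 3 is imposed on the optional bracket $[M^n]$ (Ethier--Kurtz, Theorem 7.1.4(b)).

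For the paper's application the defect is harmless. The jumps of $\mathcal{M}^n(\phi)$ are deterministically bounded by $\delta_n=Cn^{-3/2}$, so $\nu^n$ is carried by $\{|x|\le\delta_n\}$. Hence $\int_0^T\!\int|x|^3\nu^n(ds,dx)\le\delta_n\langle N^n\rangle_T\le\delta_n(q(T)+1)\to0$. With $\delta_n$ in place of $\mathbb{E}[\sup_t|\Delta M^n_t|]$, your exponential-martingale argument goes through. Use the bounded local martingale $e^{iuN^n_t+\frac{u^2}{2}\langle N^n\rangle_t}-\int_0^t e^{iuN^n_{s-}+\frac{u^2}{2}\langle N^n\rangle_s}\,dR^n_s$, where $R^n$ is the Taylor-remainder compensator. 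You should also add the hypothesis $M^n_0=0$, which you use implicitly to get a mean-zero limit.
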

  We apply the above proposition to the martingale $\mathcal{M}^n_\cdot(\phi)$ and 
  \begin{equation}
    q(t)\coloneq 2\Phi_c^{(\infty)}(\rho)\int_0^t\lVert\nabla\phi(s,\cdot)\rVert_{L^2(\mathbb{R})}^2\,ds.
  \end{equation}
  The first item follows immediately. 
  
  For the second item, recall that the probability that two or more jumps occur simultaneously is zero. Hence we have that
  \begin{equation}
    \begin{split}
      \sup_{0\leq t \leq T}\lvert \mathcal{M} ^n_t(\phi)- \mathcal{M} ^n_{t-}(\phi)\rvert&\leq\sup_{0\leq t \leq T}\lvert \mathcal{X} ^n_t(\phi)- \mathcal{X} ^n_{t-}(\phi)\rvert \\
      &\leq \sup_{t\in[0,T]}\sup_{x\in\mathbb{Z}}\left| \frac{1}{\sqrt{n}}\phi\!\left(t,\frac{x+1}{n}-\frac{x}{n}\right)\right|\\
      &\leq C\frac{1}{n^{\frac{3}{2}}}.
    \end{split}
  \end{equation}

  For the final item, we are going to prove that
  \begin{equation}
    \lim_{n\to\infty} \mathbb{E}_n\!\left[\left| \left<\mathcal{M}^n(\phi) \right>_t-q(t) \right| \right]=0.
  \end{equation}
  Since it can be checked that
  \begin{equation}
    \lim_{n\to\infty}\mathbb{E}_n\!\left[\left| \int_0^t \frac{1}{n}\sum_{x\in\mathbb{Z}}\frac{c(\eta_x(s))c(\eta_{x+1}(s))}{c(\alpha_n)}\left(\nabla^n\phi\left(s,\frac{x}{n}\right) \right)^2\,ds \right| \right]=0,
  \end{equation}
  it suffices to prove that
  \begin{equation}
    \lim_{n\to\infty}\mathbb{E}_n\!\left[\left| \int_0^t \frac{1}{n}\sum_{x\in\mathbb{Z}}[c(\eta_x(s))+c(\eta_{x+1}(s))]\left(\nabla^n\phi\!\left(s,\frac{x}{n}\right) \right)^2\,ds-q(t) \right| \right]=0.
  \end{equation}
  Moreover, we can replace $\nabla^n$ with $\nabla$ in the above formula by the Taylor expansion. 
  In other words, our task is to show that
  \begin{equation}
    \lim_{n\to\infty}\mathbb{E}_n\!\left[\left| \int_0^t \frac{1}{n}\sum_{x\in\mathbb{Z}}[c(\eta_x(s))+c(\eta_{x+1}(s))]\left(\nabla\phi\!\left(s,\frac{x}{n}\right) \right)^2\,ds-q(t) \right| \right]=0.
  \end{equation}
  In the above formula,
  \begin{equation}
    \mathbb{E}_n\!\left[\left| \int_0^t \frac{1}{n}\sum_{x\in\mathbb{Z}}[c(\eta_x(s))+c(\eta_{x+1}(s))]\left(\nabla\phi\!\left(s,\frac{x}{n}\right) \right)^2\,ds-q(t) \right| \right]\leq 3((\mathrm{I})+(\mathrm{II})+(\mathrm{III})).
  \end{equation}
  where 
  \begin{align*}
    (\mathrm{I})&\coloneq \mathbb{E}_n\!\left[\left| \int_0^t \frac{1}{n}\sum_{x\in\mathbb{Z}}[c(\eta_x(s))-\Phi^{(n)}_c(\rho)]\left(\nabla\phi\!\left(s,\frac{x}{n}\right) \right)^2\,ds\right| \right]\\
    (\mathrm{II})&\coloneq \mathbb{E}_n\!\left[\left| \int_0^t \frac{1}{n}\sum_{x\in\mathbb{Z}}[c(\eta_{x+1}(s))-\Phi^{(n)}_c(\rho)]\left(\nabla\phi\!\left(s,\frac{x}{n}\right) \right)^2\,ds\right| \right]\\
    (\mathrm{III})&\coloneq 2\Phi^{(n)}_c(\rho)\left | \int_{0}^{t}\frac{1}{n}\sum_{x\in\mathbb{Z}}\left(\nabla\phi\!\left(s,\frac{x}{n}\right) \right)^2\,ds - \int_{0}^{t} \int_{\mathbb{R}}\left(\nabla\phi\!\left(s,u\right) \right)^2 \,duds \right |
  \end{align*}
  It is not difficult to see $\lim_{n\to\infty}(\mathrm{III})=0$.

  For $(\mathrm{I})$ and $(\mathrm{II})$, we apply the Cauchy--Schwarz inequality and use the fact that $\nu^{(n)}_\rho$ is a product measure. 

  \subsubsection{The symmetric part}
  By the Cauchy--Schwarz inequality,
  \begin{equation}
    \begin{split}
      \mathbb{E}_n\!\left[ \left| \mathcal{S}^n_t(\phi)-\mathcal{S}^n_s(\phi) \right|^2 \right]&\leq C |t-s| \int_{s}^{t}\frac{1}{n}\sum_{x\in\mathbb{Z}}E_{\nu^{(n)}_\rho}\!\left[ c(\eta_x)^2 \right]\!\left(\left(\partial_s+\Delta \right) \phi\!\left(s,\frac{x}{n}\right)  \right)^2\,ds\\
      &\leq C|t-s|^2.
    \end{split}
  \end{equation}
  Then we can apply the Kolmogorov--Chentsov criterion (see \cite[Theorem~1.2.1]{RevusYor}) to prove the tightness of $\left\{\mathcal{S}^n_\cdot(\phi)\right\}_n$ and to show thatthe limiting process has continuous paths almost surely.

  \subsection{Identification of limit points}
  So far we proved that the sequences $\{\mathcal{X}^n\}_n$, $\{\mathcal{M}^n\}_n$, $\{\mathcal{S}^n\}_n$ are tight. Let $\mathcal{Q}^n$ be the distribution of the tuple $\{\mathcal{X}^n, \mathcal{M}^n, \mathcal{S}^n\}$. Then we can take a subsequence of it (still denoted by $\{\mathcal{Q}^n\}_n$) that converges in distribution to some probability measure $\mathcal{Q}$ on $D([0,T]: \mathcal{S}'(\mathbb{R}))^3$.
  In addition, let $\mathcal{X}_\cdot, \mathcal{M}_\cdot, \mathcal{S}_\cdot$ be the respective limiting processes on $D([0,T]: \mathcal{S}'(\mathbb{R}))$.

  Since each of the processes in the martingale decomposition has continuous paths almost surely, $\mathcal{Q} $ is concentrated on $C([0,T]: \mathcal{S}'(\mathbb{R}))$. Thus the following projection is continuous $\mathcal{Q}$-almost surely:
  \begin{equation}
    \pi_\phi:D([0,T]: \mathcal{S}'(\mathbb{R}))\ni \mathcal{X}_\cdot \mapsto \mathcal{X}_\cdot(\phi)\in D([0,T]:\mathbb{R}).
  \end{equation}
  Therefore, passing to the limit as $n\to\infty$ on both sides of the following
  \begin{equation}
    \mathcal{M}^n_t(\phi)= \mathcal{X}^n_t(\phi)-\mathcal{X}^n_0(\phi)-\mathcal{S}^n_t(\phi),
  \end{equation}
  we obtain
  \begin{equation}
    \mathcal{M}_t(\phi)= \mathcal{X}_t(\phi)-\mathcal{X}_0(\phi)-\mathcal{S}_t(\phi).
  \end{equation}
  Here, by the definition of $\mathcal{S}^n_t(\phi)$,
  \begin{equation}
    \mathcal{S}^n_t(\phi)=\int_0^t \mathcal{X}^n_s(\partial_s\phi_s)\,ds+\int_{0}^{t}\frac{1}{\sqrt{n}}\sum_{x\in\mathbb{Z}}\left(c(\eta_x(s))-\Phi^{(n)}_c(\rho) \right)\Delta\phi\!\left(s,\frac{x}{n} \right)\,ds.
  \end{equation}
  Thanks to Theorem \ref{BGP}, 
  \begin{equation}
    \int_{0}^{t}\frac{1}{\sqrt{n}}\sum_{x\in\mathbb{Z}}\left(c(\eta_x(s))-\Phi^{(n)}_c(\rho) \right)\Delta\phi\!\left(s,\frac{x}{n} \right)\,ds= \int_{0}^{t}\mathcal{X}^n_s\!\left(D^{(\infty)}_c(\rho)\Delta\phi_s\right)\,ds + \tilde{\mathcal{R}}_t^{n}(\phi),
  \end{equation}
  where $\tilde{\mathcal{R}}_t^{n}(\phi)$ is a real-valued random variable satisfying
  \begin{equation}
    \lim_{n\to\infty}\mathbb{E}_n\!\left[\sup_{t\in[0,T]}\lvert \tilde{\mathcal{R}}_t^{n}(\phi)\rvert^2 \right]=0.
  \end{equation}
  Therefore, since $\mathcal{X}^n_\cdot(\phi)$ converges to $\mathcal{X}_\cdot(\phi)$ in distribution and $\mathcal{X}_\cdot(\phi)$ has continuous paths, we have
  \begin{equation}
    \mathcal{S}_t(\phi)=\int_{0}^{t}\mathcal{X}_s\!\left((\partial_s+D^{(\infty)}_c(\rho)\Delta)\phi_s\right)\,ds,
  \end{equation}
  which implies that $\mathcal{X}$ is a solution to the SHE. This completes the proof of Theorem \ref{Main-theorem} by uniqueness in law of the SHE. 

  \appendix
  \section{Preliminaries for the stochastic heat equation}\label{sec-SHE}
  In this appendix, we recall some basic notions concerning Schwartz spaces, tempered distributions, white noise, and $\mathcal{S}'(\mathbb{R})$-valued Brownian motions that are used in the formulation of the stochastic heat equation. See \cite{DaPratoZabczyk} and \cite{hairer2023introductionstochasticpdes} for further details.
  \subsection{Schwartz space}
  The Schwartz space plays a central role in the theory of tempered distributions. It is used in Section \ref{sec-main-result} to define the solution of the stochastic heat equation.
  \begin{definition}
    The Schwartz space is the function space
    \begin{equation}
      \mathcal{S}(\mathbb{R}^n)\coloneq \left\{f\in C^\infty(\mathbb{R}^n)\,:\, \forall m\in\mathbb{N}_0,\, \|f\|_m<\infty \right\},
    \end{equation}
    where the seminorm $\|\cdot\|_m$ is given by
    \begin{equation}
      \|f\|_m\coloneq \sup_{\substack{\alpha=(\alpha_1,\cdots,\alpha_n)\in\mathbb{N}_0^n\\ \alpha_1+\cdots+\alpha_n\leq m}}\, \sup_{x\in\mathbb{R}^n} (1+|x|^2)^\frac{m}{2} |\partial^\alpha f(x)|.
    \end{equation}
    The space $\mathcal{S}(\mathbb{R}^n)$ is equipped with the seminorm topology induced by $\{\|\cdot\|_m \}_{m\in\mathbb{N}_0}$. It is known that $\mathcal{S}(\mathbb{R}^n)$ is a Polish space.
  \end{definition}

  \subsection{Tempered distributions with the strong topology}
  The topological dual of $\mathcal{S}(\mathbb{R}^n)$ is called the space of tempered distributions and is denoted by $\mathcal{S}'(\mathbb{R}^n)$. We introduce two well-known topologies on the space of tempered distributions $\mathcal{S}'(\mathbb{R}^n)$: the weak-$*$ topology and the strong topology.
  \begin{definition}
    The weak-$*$ topology is the coarsest topology on $\mathcal{S}'(\mathbb{R}^n)$ which makes the evaluation map $\mathcal{S}'(\mathbb{R}^n)\ni f\mapsto \langle f, \phi \rangle \in\mathbb{R}$ continuous for all $\phi\in \mathcal{S}(\mathbb{R}^n)$.
  \end{definition}
  In other words, the weak-$*$ topology is closed under the pointwise convergence.

  \begin{definition}
    The strong topology is the coarsest topology on $\mathcal{S}'(\mathbb{R}^n)$ which makes the following map $p_K\colon\mathcal{S}'(\mathbb{R}^n)\ni f\mapsto \sup_{\phi\in K}|\langle f, \phi \rangle|\in\mathbb{R}$ continuous for all compact subsets $K \subset \mathcal{S}(\mathbb{R}^n)$.
  \end{definition}

  Intuitively, the strong topology is the appropriate one when one wishes to consider uniform convergence on compact sets. Hereafter, we equip $\mathcal{S}'(\mathbb{R}^n)$ with the strong topology.

  \subsection{White noise and $\mathcal{S}'(\mathbb{R})$-valued Brownian motion}
  \begin{definition}
    An $\mathcal{S}'(\mathbb{R})$-valued random variable $\mathcal{W}$ is called the white noise with mean 0 and variance $\chi$ if for any $l\in\mathbb{N}$ and any $\phi_1,\phi_2,\cdots,\phi_l\in\mathcal{S}(\mathbb{R})$, $\left\{\mathcal{W}(\phi_1),\cdots,\mathcal{W}(\phi_l) \right\}$ is a mean-zero Gaussian random vector with covariances given by
    \begin{equation}
      E\!\left[\mathcal{W}(\phi_i)\mathcal{W}(\phi_j) \right]=\chi \langle\phi_i,\phi_j\rangle_{L^2(\mathbb{R})}.
    \end{equation}
  \end{definition}
  Next we consider the space $C([0,T]:\mathcal{S}'(\mathbb{R}))$ with the topology defined below.
  \begin{definition}
    $C([0,T]:\mathcal{S}'(\mathbb{R}^n))$ is the space of continuous functions from $[0,T]$ to $\mathcal{S}'(\mathbb{R}^n)$ equipped with the topology induced by a family of seminorms $q_K$ indexed by compact subsets $K \subset \mathcal{S}(\mathbb{R}^n)$.
    \begin{equation}
      q_K\colon C([0,T]:\mathcal{S}'(\mathbb{R}^n))\ni f\mapsto \sup_{t\in[0,T]}p_K\!\left(f(t)\right)\in\mathbb{R}.
    \end{equation}
  \end{definition}

  \begin{fact}[see e.g. {\cite[Proposition 34.6]{Treves}}]
    $\{y_t\}_{t\in[0,T]}$ belongs to $C([0,T]:\mathcal{S}'(\mathbb{R}))$ if and only if $\{y_t(\phi)\}_{t\in[0,T]}$ belongs to $C([0,T]:\mathbb{R})$ for all $\phi\in\mathcal{S}(\mathbb{R})$.
  \end{fact}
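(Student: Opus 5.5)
The ``only if'' direction is immediate. For each $\phi\in\mathcal{S}(\mathbb{R})$ the singleton $\{\phi\}$ is compact, so $f\mapsto|\langle f,\phi\rangle|=p_{\{\phi\}}(f)$ is continuous for the strong topology. Since $\mathcal{S}'(\mathbb{R})$ is a topological vector space, the evaluation $f\mapsto\langle f,\phi\rangle$ is then continuous as well. Composing it with a continuous path $t\mapsto y_t$ shows that $t\mapsto y_t(\phi)$ is continuous.

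For the ``if'' direction the plan is to upgrade pointwise continuity to uniform continuity over compact sets of test functions, via an equicontinuity estimate. The strong topology is a vector topology generated by the seminorms $p_K$. So it suffices to show that for every compact $K\subset\mathcal{S}(\mathbb{R})$ and every $s\in[0,T]$ we have $p_K(y_t-y_s)=\sup_{\phi\in K}|y_t(\phi)-y_s(\phi)|\to0$ as $t\to s$.

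The first step, which I expect to be the only non-routine one, is a uniform bound on the family $\{y_t\}_{t\in[0,T]}$. For each fixed $\phi$, the map $t\mapsto y_t(\phi)$ is continuous on the compact interval $[0,T]$, hence bounded. Thus $\{y_t\}_{t\in[0,T]}$ is pointwise bounded on $\mathcal{S}(\mathbb{R})$. Since $\mathcal{S}(\mathbb{R})$ is a Fréchet space, hence barrelled, the Banach--Steinhaus theorem makes this family equicontinuous. Equicontinuity means there exist $m\in\mathbb{N}_0$ and $C>0$ such that
\begin{equation*}
|y_t(\phi)|\le C\|\phi\|_m\qquad\text{for all }t\in[0,T],\ \phi\in\mathcal{S}(\mathbb{R}).
\end{equation*}

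The second step is a finite-net argument. Fix a compact $K$ and $\varepsilon>0$. Since $K$ is compact in $\mathcal{S}(\mathbb{R})$, it is totally bounded for the continuous seminorm $\|\cdot\|_m$. So there are $\phi_1,\dots,\phi_N\in K$ such that every $\phi\in K$ satisfies $\|\phi-\phi_i\|_m<\varepsilon$ for some $i$. For such $\phi$ and $i$,
\begin{equation*}
|y_t(\phi)-y_s(\phi)|\le|y_t(\phi_i)-y_s(\phi_i)|+|y_t(\phi-\phi_i)|+|y_s(\phi-\phi_i)|\le\max_{1\le i\le N}|y_t(\phi_i)-y_s(\phi_i)|+2C\varepsilon .
\end{equation*}
The maximum runs over finitely many functions, each of which is continuous in $t$ by hypothesis, so it tends to $0$ as $t\to s$. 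This gives $\limsup_{t\to s}p_K(y_t-y_s)\le2C\varepsilon$. Since $\varepsilon>0$ was arbitrary, $t\mapsto y_t$ is continuous into $\mathcal{S}'(\mathbb{R})$ with the strong topology, which completes the proof.
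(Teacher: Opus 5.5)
Your proof is correct and is essentially the standard argument behind the result the paper cites from Tr\`eves; the paper gives no proof of its own. In that argument, Banach--Steinhaus on the Fr\'echet, hence barrelled, space $\mathcal{S}(\mathbb{R})$ makes the pointwise bounded family $\{y_t\}_{t\in[0,T]}$ equicontinuous, and on equicontinuous sets pointwise convergence upgrades to uniform convergence on compact sets by exactly your finite-net estimate. You are also right to read the strong topology as the locally convex topology generated by the seminorms $p_K$. Taken literally, the paper's ``coarsest topology making each $p_K$ continuous'' would make the ``only if'' direction false: take $y_t=\delta_0$ for $t<T/2$ and $y_t=-\delta_0$ for $t\geq T/2$, for which every $p_K(y_t)$ is constant in $t$ while $y_t(\phi)$ jumps whenever $\phi(0)\neq 0$.
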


  \begin{definition}
    We say that $\{\mathcal{B}_t\}_{t\in[0,T]}$ is an $\mathcal{S}'(\mathbb{R})$-valued Brownian motion if $\{\mathcal{B}_t\}_{t\in[0,T]}$ belongs to $C([0,T]:\mathcal{S}'(\mathbb{R}))$ and $\{\lVert\phi\rVert_{L^2(\mathbb{R})}^{-1}\mathcal{B}_t(\phi)\}_{t\in[0,T]}$ is a Brownian motion on $\mathbb{R}$ for all $\phi\in\mathcal{S}(\mathbb{R})\backslash \{0\}$.
  \end{definition}
\nocite{komorowski2012fluctuations}
\nocite{JARA20205973}
\nocite{JaraFlores2019}
\nocite{HAYASHI2023180}
\nocite{DGP2017}
\nocite{liggett2025continuous}
\nocite{liggett1985interacting}
\nocite{hairer2023introductionstochasticpdes}
\bibliographystyle{amsplain} 
\bibliography{references} 
\end{document}